\renewcommand{\@seccntformat}[1]{\csname the#1\endcsname.}
\begin{document}
%%%%%%%%%%%%% begin theorem definition %%%%%%%%%%%%%%%%%%
\newtheoremstyle{mytheorem}
  {\topsep}   % ABOVESPACE
  {\topsep}   % BELOWSPACE
  {\itshape}  % BODYFONT
  {}       % INDENT (empty value is the same as 0pt)
  {\bfseries} % HEADFONT
  {. }         % HEADPUNCT
  {5pt plus 1pt minus 1pt} % HEADSPACE
  { }          % CUSTOM-HEAD-SPEC
\newtheoremstyle{myremark}
  {\topsep}   % ABOVESPACE
  {\topsep}   % BELOWSPACE
  {\upshape}  % BODYFONT
  {}       % INDENT (empty value is the same as 0pt)
  {\bfseries} % HEADFONT
  {. }         % HEADPUNCT
  {5pt plus 0pt minus 1pt} % HEADSPACE
  {}          % CUSTOM-HEAD-SPEC\cite{}
\theoremstyle{mytheorem}
\newtheorem{theorem}{Theorem}[section]
 \newtheorem{theorema}{Theorem}
 \renewcommand{\thetheorema}{\Alph{theorema}}
\newtheorem{proposition}[theorem]{Proposition} 
 \newtheorem{lemma}[theorem]{Lemma}
\newtheorem{corollary}[theorem]{Corollary}
\newtheorem{definition}[theorem]{Definition}
\theoremstyle{myremark}
\newtheorem{remark}[theorem]{Remark}
%%%%%%%%%%%%%%%%%%%%% end theorem definition %%%%%%%%%%%%%%%%%%
\noindent This article is accepted for publishing in

\noindent Journal of Mathematical Analysis and Applications

\vskip 1 cm

\noindent{\bf Characterization of probability distributions on some  
locally \\ compact Abelian groups containing an element of order $2$}

\bigskip

\noindent{\bf Gennadiy Feldman} 

\bigskip

\noindent {\bf  Abstract.} {The well-known Heyde  theorem    
characterizes the Gaussian distributions on the real line by the symmetry of the conditional distribution of one linear form of independent random variables 
given another. We generalize 
this theorem to groups of the form $\mathbb{R}\times F$,
where $F$ is a finite Abelian group such that its 2-component is isomorphic to the  additive group of the integers 
modulo $2$.
In so doing, coefficients of the linear forms  are arbitrary topological 
automorphisms of the group. Previously, a similar result was proved in 
the case when the group $F$ contains no elements of order 2. 
The presence of an element of order 2 in $F$
 leads to the fact that a new class 
of probability distributions is characterized}.

\bigskip

\noindent {\bf Mathematics Subject Classification.}   43A25,  43A35, 60B15, 62E10.

\bigskip

\noindent{\bf Keywords.} Heyde's theorem,   topological
  automorphism,
locally compact Abelian group.

\section { Introduction}

By the well-known Skitovich--Darmois theorem the Gaussian distribution  
on the real line is characterized by the independence of two linear forms
of independent random variables. A similar theorem was proved by C.~C.~Heyde,
where instead of the independence the symmetry of the conditional distribution 
of one linear form  given another was considered 
(\!\!\cite{He}, see also \cite[Theorem 13.4.1]{KaLiRa}). 
 A number of works  
were devoted to generalizing of   
Heyde's theorem to various locally compact Abelian 
groups (see e.g. \cite{F2023, JFAA2021, Rima, Fe4, Fe20bb, My2, Fe2015a, Fe6, M2023, POTA, FeTVP1, {F_solenoid}}  and also   
  \cite[Chapter IV]{Febooknew}, where one can find additional references).  
  In so doing, coefficients of the linear forms  are   topological 
  automorphisms of a group, as a rule satisfying  certain conditions.
 
 In article \cite{Rima} Heyde's theorem for two independent random variables 
was generalized to groups of the form $\mathbb{R}\times G$, where $G$ is a finite Abelian group containing no elements of order 2. The presence of  elements 
of order 2 in a group plays an exceptional role in generalizing Heyde's theorem.

Let $X$ be a finite Abelian group. For each prime number $p$ denote  by $X_p$ 
the subgroup of $X$ consisting of all elements of $X$ whose order is a power of $p$.
The subgroups  $X_p$ are called $p$-components of $X$.
The aim of the article is to generalize Heyde's theorem  
to groups of the form $\mathbb{R}\times F$, where $F$ is a finite Abelian group 
 such that its 2-component is isomorphic to the additive group of 
 the integers modulo 2.
It turns out that in this case a new class 
of probability distributions is characterized. It is very likely that this class of distributions arise when we prove other characterization theorems on groups with similar structures and also in studying of the arithmetic of some semigroups of probability distributions on groups. Let us note one more circumstance. Usually, the proof of a characterization theorem on a locally compact Abelian group reduces to solving some functional equation for characteristic functions of the random variables on the  character group of the original group. In other words, we   deal  only with the characteristic functions, and not with the distributions. In our case, a new method is used. It consists of simultaneous consideration of both the characteristic functions and the distributions themselves. This method may also have applications in the study of other characterization problems.

It should be emphasized that despite the probabilistic formulation of the problem, 
the proof is purely analytical. Mainly methods of abstract 
harmonic analysis are used, as well as some facts of 
complex analysis.

Consider a locally compact Abelian  (LCA) group $X$ and denote by $Y$ its character
group. Take  $x \in X$ and denote by  $(x,y)$ the value of a 
character $y \in Y$ at the element  $x$.   Let $H$ be a closed subgroup of 
the group $Y$.   Denote by
 $A(X, H) = \{x \in X: (x, y) =1$ \mbox{for all } $y\in
H\}$
the annihilator of $H$. 
The group
of all topological automorphisms of the group $X$ we denote by ${\rm Aut}(X)$.
The identity 
automorphism of a group is denoted by $I$.   Let $\alpha\in{\rm Aut}(X)$ and  
$K$ be a closed subgroup of   $X$. Suppose  $\alpha(K)=K$, i.e., the restriction  
of  
$\alpha$ to $K$ is a topological automorphism of the group $K$. 
This restriction is denoted by $\alpha_{K}$. A closed subgroup $K$
of the group $X$ is called characteristic if $\alpha(K)=K$ for any 
$\alpha\in{\rm Aut}(X)$.
Let
 $\alpha\in{\rm Aut}(X)$. The adjoint automorphism
$\widetilde\alpha\in{\rm Aut}(Y)$
is defined by the formula $(x, \widetilde\alpha y)=(\alpha x, y)$ for all $x\in X$, 
$y\in Y$. It should be noted that $\alpha\in{\rm Aut}(X)$ if and only if 
$\widetilde\alpha\in{\rm Aut}(Y)$.  Set $X^{(2)}=\{2x: x\in X\}$. 
 The additive groups of real numbers and  
 the integers 
modulo $2$ are denoted by $\mathbb{R}$ and 
$\mathbb{Z}(2)=\{0, 1\}$ respectively. Denote by $\mathbb{C}$ the 
complex plane.

Denote by $\mathrm{M}^1(X)$ the set of all probability
distributions on the group $X$. Let $\mu, \nu\in \mathrm{M}^1(X)$. The convolution
$\mu*\nu\in \mathrm{M}^1(X)$ is defined by the formula
$$
\mu*\nu(E)=\int\limits_{X}\mu(E-x)d \nu(x)
$$
for any Borel subset $E$ of $X$.
Then the set $\mathrm{M}^1(X)$ is the semigroup with respect to the convolution. 
Take $\mu\in\mathrm{M}^1(X)$. 
The characteristic function (Fourier transform) 
$\hat\mu(y)$ of the distribution  $\mu$
is defined by the formula
$$
\hat\mu(y) =
\int\limits_{X}(x, y)d \mu(x), \quad y\in Y.$$
For   signed measures, the convolution and the   
characteristic function are defined similarly. We recall that 
$\widehat{\mu*\nu}(y)=\hat\mu(y)\hat\nu(y)$.
     The support of $\mu$ is denoted by $\sigma(\mu)$.   
For $x\in X$ denote by $E_x$  
   the degenerate distribution
 with the support at the element $x$. Denote by $\Gamma(\mathbb{R})$ the 
 set of Gaussian distributions on the group $\mathbb{R}$ 
 (by Gaussian distributions on the real line we mean both 
 ordinary normal distributions and degenerate distributions).
 
\section { Lemmas}

To prove the main theorem we need some lemmas.

\begin{lemma}[\kern-0.55ex{\protect \cite[Lemma 16.1]{Fe5}}]\label{le1}
Assume that $X$ is a second countable LCA group,   $Y$ is its character group,   
and   $\alpha\in{\rm Aut}(X)$.
Let
$\xi_j$, $j=1, 2$,  be independent random variables with values in the group 
$X$  and distributions $\mu_j$.  Then the following statements are equivalent:
\renewcommand{\labelenumi}{\rm(\roman{enumi})}
\begin{enumerate}
  
\item	

the conditional distribution of the 
linear form $L_2 = \xi_1 + \alpha\xi_2$ given $L_1 = \xi_1 + \xi_2$ is 
symmetric; 

\item

the characteristic functions
 $\hat\mu_j(y)$ satisfy the equation
\begin{equation}\label{03.01.1}
\hat\mu_1(u+v)\hat\mu_2(u+\widetilde\alpha v)=
\hat\mu_1(u-v)\hat\mu_2(u-\widetilde\alpha v), \quad u, v \in Y.
\end{equation}
\end{enumerate}
\end{lemma}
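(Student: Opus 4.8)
The plan is to reformulate the symmetry of the conditional distribution as an equality of two distributions on $X\times X$ and then translate that equality into the language of characteristic functions, where it becomes exactly \eqref{03.01.1}. First I would fix the meaning of statement (i). The pair $(L_1,L_2)$ is a random variable with values in $X\times X$; denote its distribution by $P$. Since $X$ is second countable, $X\times X$ is a Polish space, and the regular conditional distribution of $L_2$ given $L_1$ exists and is essentially unique. Statement (i) then means that for $P_{L_1}$-almost every $l$ the conditional law of $L_2$ given $L_1=l$ is invariant under the map $x\mapsto -x$.

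The key reduction is the equivalence of (i) with the assertion that the distributions of $(L_1,L_2)$ and $(L_1,-L_2)$ coincide. In one direction, if the conditional law is almost surely symmetric, then conditioning on $L_1$ gives, for all bounded Borel functions $f,g$ on $X$,
$$\mathbf{E}\,f(L_1)g(L_2)=\mathbf{E}\,f(L_1)g(-L_2),$$
which is precisely the stated equality of the two laws on $X\times X$. The converse follows from the essential uniqueness of the regular conditional distribution: coincidence of the joint laws forces the conditional law of $L_2$ given $L_1$ and that of $-L_2$ given $L_1$ to agree almost surely, that is, symmetry of the former.

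Next I would pass to characteristic functions. The character group of $X\times X$ is $Y\times Y$, with $((x_1,x_2),(u,v))=(x_1,u)(x_2,v)$. By the uniqueness theorem for distributions on an LCA group, the two laws above coincide if and only if their characteristic functions agree at every $(u,v)\in Y\times Y$. It remains to compute these. Using $L_1=\xi_1+\xi_2$, $L_2=\xi_1+\alpha\xi_2$, the defining relation $(\alpha x,v)=(x,\widetilde\alpha v)$ of the adjoint automorphism, and the independence of $\xi_1$ and $\xi_2$, I would factor
$$\mathbf{E}\,(L_1,u)(L_2,v)=\mathbf{E}\,(\xi_1,u+v)\,\mathbf{E}\,(\xi_2,u+\widetilde\alpha v)=\hat\mu_1(u+v)\hat\mu_2(u+\widetilde\alpha v),$$
and analogously the characteristic function of $(L_1,-L_2)$ at $(u,v)$ equals $\hat\mu_1(u-v)\hat\mu_2(u-\widetilde\alpha v)$. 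Equating the two yields exactly \eqref{03.01.1}, establishing the equivalence of (i) and (ii).

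The character computation of the last step is purely formal; the only point requiring care is the reduction in the second paragraph, namely the equivalence between the almost sure symmetry of the conditional distribution and the coincidence of the laws of $(L_1,L_2)$ and $(L_1,-L_2)$. This rests on the existence and essential uniqueness of regular conditional distributions, which is precisely what the second countability hypothesis on $X$ guarantees. I expect this to be the main obstacle; once it is in place, the remainder is a direct translation into characteristic functions.
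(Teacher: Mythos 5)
Your argument is correct and is the standard proof of this lemma; the paper itself gives no proof, citing it from \cite[Lemma 16.1]{Fe5}, where the argument runs exactly as you describe (symmetry of the conditional law is equivalent to the coincidence of the distributions of $(L_1,L_2)$ and $(L_1,-L_2)$, which by the uniqueness theorem for Fourier transforms of measures on the LCA group $X\times X$ and the factorization via independence and the adjoint relation $(\alpha x,v)=(x,\widetilde\alpha v)$ yields equation~(\ref{03.01.1})). The second countability hypothesis is indeed used precisely where you place it, to guarantee existence and essential uniqueness of the regular conditional distribution.
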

Thanks to Lemma \ref{le1}, the description of distributions of independent random variables with values in the group $X$ which are characterized by
the symmetry of the conditional distribution of one linear form 
of these random variables given another is 
reduced to the description of solutions of functional equation (\ref{03.01.1}) in the class of characteristic functions on the group $Y$.

The following lemma is an analogue of Heyde's theorem for finite Abelian groups containing no elements of order  $2$.
 
\begin{lemma}[\kern-0.55ex{\protect \cite[\S 2]{Rima}, see also \cite[Theorem 9.11]{Febooknew}}]\label{le2}  Assume that $G$ is a finite Abelian group, $G$ contains no elements of order  $2$,  and   $\alpha$ is an automorphism of    
$G$. Put $K={\rm Ker}(I+\alpha)$. Let $\xi_j$, $j=1, 2$, be
independent random variables with values in the group  $G$ and distributions
$\mu_j$ with nonvanishing characteristic functions. Suppose that the conditional  distribution of the linear form $L_2 = \xi_1 + \alpha\xi_2$ given $L_1 = \xi_1 + \xi_2$ is symmetric.
Then $\mu_j=\omega*E_{g_j}$, where $\omega$ is a distribution  supported in $K$, 
$g_j\in G$. Moreover,  if  $\zeta_j$ are independent identically distributed random variables with values in the group  $G$ and distribution
    $\omega$, then the conditional distribution of the linear form
$M_2 = \zeta_1 +\alpha\zeta_2$ given $M_1 = \zeta_1 + \zeta_2$  is symmetric.
\end{lemma}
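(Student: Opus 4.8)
The plan is to work on the character group $Y$ of $G$, which is again a finite Abelian group of odd order, and, via Lemma \ref{le1}, to replace the hypothesis by the functional equation (\ref{03.01.1}) for the nonvanishing characteristic functions $\hat\mu_1,\hat\mu_2$. Since $2g=0$ forces $g=0$ in $G$, multiplication by $2$ is an automorphism of $G$, of $Y$, and of every subgroup and quotient; I will use this repeatedly, as it is precisely the point where the absence of elements of order $2$ enters.

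First I would pin down the supports. Let $\bar\mu_j$ be the measure with $\hat{\bar\mu}_j(y)=\overline{\hat\mu_j(y)}$ and put $\nu_j=\mu_j*\bar\mu_j$, so that $\hat\nu_j=|\hat\mu_j|^2>0$. Multiplying (\ref{03.01.1}) by its complex conjugate shows $\hat\nu_1,\hat\nu_2$ satisfy the same equation; the characteristic functions now being positive, I set $\phi_j=\log\hat\nu_j$ (real and even) and obtain the additive equation
\[
\phi_1(u+v)+\phi_2(u+\widetilde\alpha v)=\phi_1(u-v)+\phi_2(u-\widetilde\alpha v).
\]
Putting $u=\tfrac12(a+a')$, $v=\tfrac12(a-a')$ (legitimate by odd order) and eliminating $\phi_1$ through the specializations $a'=0$, $a=0$, a standard finite-difference computation, as in the group versions of the Heyde and Skitovich--Darmois theorems, shows that $\hat\nu_j$ differs from a function invariant under $A(Y,K)=\mathrm{Im}(I+\widetilde\alpha)$ only by a Gaussian factor. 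On a finite group every Gaussian distribution is degenerate, and being symmetric it equals $E_0$; hence $\hat\nu_j$ is $A(Y,K)$-invariant, i.e. $\nu_j$ is supported in $K=A\big(X,A(Y,K)\big)$. Then $\sigma(\mu_j)-\sigma(\mu_j)=\sigma(\nu_j)\subseteq K$, so $\mu_j$ is concentrated on one coset $g_j+K$ and $\mu_j=\omega_j*E_{g_j}$ with $\omega_j\in\mathrm{M}^1(K)$.

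Next I would identify the two factors. Writing $\hat\mu_j(y)=(g_j,y)\hat\omega_j(y)$ in (\ref{03.01.1}) and cancelling the common factor $(g_1+g_2,u)$ reduces the equation to
\[
(g_1+\alpha g_2,v)\,\hat\omega_1(u+v)\hat\omega_2(u+\widetilde\alpha v)=(g_1+\alpha g_2,v)^{-1}\hat\omega_1(u-v)\hat\omega_2(u-\widetilde\alpha v).
\]
Since $\omega_j$ lives on $K$, where $\alpha=-I$, its characteristic function factors through $Y/A(Y,K)\cong K^{*}$ and $\widetilde\alpha$ acts there as $-I$; writing $\hat\omega_j(y)=w_j(\bar y)$ and $s=\bar u$, $t=\bar v$, the displayed relation becomes $(g_1+\alpha g_2,v)^2\,w_1(s+t)w_2(s-t)=w_1(s-t)w_2(s+t)$. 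The right-hand side depends on $v$ only through $\bar v$, so $(2(g_1+\alpha g_2),v)$ is trivial on $A(Y,K)$, that is $2(g_1+\alpha g_2)\in K$, whence $g_1+\alpha g_2\in K$ by odd order. Replacing $g_1$ by $g_1-(g_1+\alpha g_2)$ inside its coset I may assume $g_1+\alpha g_2=0$; the character factor disappears, and setting $s=t$ gives $w_1(2t)=w_2(2t)$ for all $t$. As $t\mapsto 2t$ is bijective on $K^{*}$, this yields $w_1=w_2$, so $\omega_1=\omega_2=:\omega$, and $\mu_j=\omega*E_{g_j}$ with $\omega$ supported in $K$.

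The concluding assertion is then automatic: on $K$ one has $\alpha=-I$, so for independent identically distributed $\zeta_j$ with distribution $\omega$ the forms are $M_1=\zeta_1+\zeta_2$ and $M_2=\zeta_1-\zeta_2$, and the transposition $\zeta_1\leftrightarrow\zeta_2$ fixes $M_1$, reverses $M_2$, and preserves the joint law, so the conditional distribution of $M_2$ given $M_1$ is symmetric. I expect the genuine difficulty to lie in the support step, namely solving the additive equation and excluding a nondegenerate Gaussian component, together with the implication $2(g_1+\alpha g_2)\in K\Rightarrow g_1+\alpha g_2\in K$: this is exactly where odd order is indispensable, and its failure when an element of order $2$ is present is what produces the new class of distributions studied in the rest of the paper.
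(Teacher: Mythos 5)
First, a point of comparison: the paper does not prove Lemma \ref{le2} at all --- it is imported as a known result from \cite[\S 2]{Rima} (see also \cite[Theorem 9.11]{Febooknew}) --- so your proposal is being measured against a citation, not an internal argument. Judged on its own terms, your outline follows the standard route, and the steps you actually carry out are correct: the symmetrization $\nu_j=\mu_j*\bar\mu_j$ and passage to an additive equation for $\log\hat\nu_j$; the deduction $\mu_j=\omega_j*E_{g_j}$ with $\sigma(\omega_j)\subset K$ once $\sigma(\nu_j)\subset K$ is known; the reduction modulo $A(Y,K)=\mathrm{Im}(I+\widetilde\alpha)$, the conclusion $2(g_1+\alpha g_2)\in K$ and hence $g_1+\alpha g_2\in K$ (odd order used exactly where you say), and the identification $\omega_1=\omega_2$ from $w_1(2t)=w_2(2t)$ and surjectivity of doubling; and the closing i.i.d.\ swap argument giving the symmetry of $M_2=\zeta_1-\zeta_2$ given $M_1$.

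The genuine gap is the support step, which is the entire content of the cited result and which you dispose of with ``a standard finite-difference computation.'' Concretely: writing $P=I+\widetilde\alpha$, $M=I-\widetilde\alpha$ and eliminating $\phi_1$ via the specialization $a'=0$ (note that $a=0$ yields the \emph{same} relation $\phi_1(a)=\phi_2(\tfrac12 Ma)-\phi_2(\tfrac12 Pa)$, not an independent one), what remains is the identity $B(\tfrac12 Pa,\tfrac12 Ma')=B(\tfrac12 Ma,\tfrac12 Pa')$ for the second difference $B(x,y)=\phi_2(x+y)-\phi_2(x)-\phi_2(y)$, and extracting from this that $\phi_1,\phi_2$ vanish on $\mathrm{Im}\,P$ is the actual work: the increments produced by differencing lie only in $\mathrm{Im}\,P$ and $\mathrm{Im}\,M$, and when $I-\widetilde\alpha$ has a nontrivial kernel the obvious averaging or iteration arguments do not close without further care. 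Moreover the intermediate statement you invoke --- that $\hat\nu_j$ ``differs from an invariant function only by a Gaussian factor'' --- is language borrowed from the $\mathbb{R}^n$ setting; on a finite group the correct assertion is that the relevant iterated differences vanish and that real-valued polynomials on a finite group are constant, and that still has to be proved for the specific increments available here. Until this is done, the proposal is a (correct) roadmap of the argument of \cite{Rima} rather than a proof of it.
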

The following  assertion is well known. We formulate it in the form of a lemma. 
\begin{lemma}\label{le6}  Assume that $X$ is a LCA group, $Y$ is its character group,  $H$ is a closed subgroup of  $Y$, and  $\mu$ is a 
distribution on $X$.
 If $\hat\mu(y)=1$ for all $y\in H$, then $\sigma(\mu)\subset A(X, H)$.    
\end{lemma}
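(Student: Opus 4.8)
The plan is to reduce the statement to the trivial case of a distribution whose characteristic function is identically $1$ by passing to a suitable quotient group. Set $K = A(X, H)$; this is a closed subgroup of $X$. The crucial structural input is the duality between subgroups and annihilators: since $H$ is a \emph{closed} subgroup of $Y$, the double-annihilator theorem gives $A(Y, K) = A(Y, A(X, H)) = H$. Consequently, the character group of the quotient $X/K$ is canonically identified with $H$, where a character $y \in H$ acts by $(x + K, y) = (x, y)$, which is well defined precisely because $y$ is trivial on $K = A(X,H)$.

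Next I would push the distribution forward. Let $\pi : X \to X/K$ be the canonical homomorphism and let $\nu = \pi(\mu) \in \mathrm{M}^1(X/K)$ be the image distribution. For every character $y \in H$ of $X/K$ one computes
$$\hat\nu(y) = \int_{X/K}(x+K, y)\, d\nu(x+K) = \int_X (x, y)\, d\mu(x) = \hat\mu(y) = 1,$$
using the change-of-variables formula for the pushforward together with the hypothesis that $\hat\mu(y) = 1$ on $H$. Thus the characteristic function of $\nu$ is identically $1$ on the character group of $X/K$. By the uniqueness theorem for the Fourier transform on an LCA group, a distribution with characteristic function identically $1$ must coincide with the degenerate distribution at the zero element; hence $\nu = E_0$ on $X/K$. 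This means $\mu(K) = \nu(\{0\}) = 1$, and since $K = A(X, H)$ is closed, the support of $\mu$, being the smallest closed set of full measure, satisfies $\sigma(\mu) \subset K = A(X, H)$, as required.

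I expect the only genuine subtlety to be the identification $A(Y, A(X,H)) = H$ together with the description of the dual of $X/K$; this is where the hypothesis that $H$ is closed is essential. An alternative, more hands-on route avoids duality: since $|(x,y)| = 1$, the equality $\int_X (x,y)\, d\mu(x) = 1$ forces $\int_X (1 - \mathrm{Re}\,(x,y))\, d\mu(x) = 0$ with a nonnegative integrand, so $(x, y) = 1$ for $\mu$-almost all $x$ and each fixed $y \in H$. The difficulty there is that $H$ may be uncountable, so one cannot directly take the union over $y \in H$ of the exceptional null sets; the quotient argument is precisely the device that circumvents this measurability obstacle, which is why I would organize the proof around it.
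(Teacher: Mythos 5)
Your quotient argument is correct: $K=A(X,H)$ is closed, the double--annihilator theorem gives $A(Y,K)=H$ because $H$ is closed, the dual of $X/K$ is canonically $H$, the pushforward $\nu=\pi(\mu)$ has $\hat\nu\equiv 1$, uniqueness of the Fourier transform forces $\nu=E_0$, and hence $\mu(K)=1$ and $\sigma(\mu)\subset K$. The paper itself offers no proof --- it records the lemma as a well-known fact --- so there is nothing to compare against except the standard folklore argument. That standard argument is the ``hands-on'' route you describe and then set aside, and it is worth noting that the obstacle you raise for it is illusory: one does not need to take a union of exceptional null sets over an uncountable $H$. For each \emph{fixed} $y\in H$, the identity $\int_X\bigl(1-\mathrm{Re}\,(x,y)\bigr)\,d\mu(x)=0$ with nonnegative integrand shows that the closed set $\{x\in X:(x,y)=1\}$ has full $\mu$-measure, hence contains $\sigma(\mu)$ (the smallest closed set of full measure). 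Intersecting these closed sets over all $y\in H$ --- an operation that preserves the inclusion of $\sigma(\mu)$ with no measure-theoretic input whatsoever --- yields $\sigma(\mu)\subset\bigcap_{y\in H}\{x:(x,y)=1\}=A(X,H)$ directly, without quotients, duality for $X/K$, or the uniqueness theorem. So your proof is valid but routes through more machinery than necessary; the elementary argument is both shorter and closer to what the author presumably has in mind when calling the assertion well known.
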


For what follows, we will need the following notation. Elements of the group   
$\mathbb{R}\times \mathbb{Z}(2)$ are  denoted by 
$(t,   m)$, where $t\in \mathbb{R}$,   $m\in \mathbb{Z}(2)$.
Elements of the character group   of the group $\mathbb{R}\times \mathbb{Z}(2)$
which is topologically isomorphic 
 to $\mathbb{R}\times \mathbb{Z}(2)$  
are  denoted by $(s, n)$, where $s\in \mathbb{R}$,   $n\in \mathbb{Z}(2)$. 
% The value of a character $(s, n)$ at an element $(t, m)$   is defined by 
%the formula $((t, m),(s, n))= e^{its}(-1)^{mn}$. 
 
\begin{lemma}[\kern-0.55ex{\protect \cite[Lemma 4.1]{F_solenoid}, see also \cite[Lemma 11.1]{Febooknew}}]
 \label{le19.01.1}  
     Assume that a function $\phi(s, n)$ on  
the character 
group of the group $\mathbb{R}\times \mathbb{Z}(2)$ is represented in the form
\begin{equation}\label{21.01.1}  
\phi(s, n) = \begin{cases}\exp\{-\sigma s^2+i{\mathfrak{m}} s\}, &\text{\ if\ }\ 
\ s\in \mathbb{R}, 
\ n=0,\\
\varkappa\exp\{-\sigma' s^2+i\mathfrak{m}'s\}, &\text{\ if\ }\ \ s\in \mathbb{R}, 
\   n=1,
\\
\end{cases}
\end{equation} 
where $\sigma\ge 0$,  $\sigma'\ge 0$ and  
$\mathfrak{m}, \mathfrak{m}', \varkappa\in \mathbb{R}$. 
Then there is a signed measure 
$\mu$ on the 
group $\mathbb{R}\times \mathbb{Z}(2)$ such that
$\hat\mu(s, n)=\phi(s, l)$, $s\in \mathbb{R}$,   $n\in \mathbb{Z}(2)$.
Moreover, the following statements are equivalent:
\renewcommand{\labelenumi}{\rm(\roman{enumi})}
\begin{enumerate}
\item	
$\mu$ is a distribution; 
\item
either 
\begin{equation}\label{08.01.8}
0<\sigma'<\sigma, \quad 
0<|\varkappa|\le\sqrt\frac{\sigma'}{\sigma}\exp\left\{-\frac{(\mathfrak{m}-\mathfrak{m}')^2}
{4(\sigma-\sigma')}\right\} 
\end{equation}
or
\begin{equation}\label{08.01.9}
\sigma=\sigma', \quad\mathfrak{m}=\mathfrak{m}', \quad  |\varkappa|\le 1.
\end{equation}
\end{enumerate}
If $(\ref{08.01.9})$ is satisfied, then $\mu\in \Gamma(\mathbb{R})*\textup{M}^1(\mathbb{Z}(2))$. 
\end{lemma}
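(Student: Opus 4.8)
The plan is to reduce everything to the two fibers of $\mathbb{R}\times\mathbb{Z}(2)$ lying over $\mathbb{Z}(2)$. Any finite signed measure on $\mathbb{R}\times\mathbb{Z}(2)$ can be written as $\mu=\mu_0+\mu_1$, where $\mu_m$ is carried by $\mathbb{R}\times\{m\}$ and is identified with a signed measure on $\mathbb{R}$; using the pairing $((t,m),(s,n))=e^{its}(-1)^{mn}$ I would record that $\hat\mu(s,0)=\hat\mu_0(s)+\hat\mu_1(s)$ and $\hat\mu(s,1)=\hat\mu_0(s)-\hat\mu_1(s)$. Since $\exp\{-\sigma s^2+i\mathfrak{m}s\}$ and $\exp\{-\sigma's^2+i\mathfrak{m}'s\}$ are the characteristic functions of Gaussian distributions $\gamma_0,\gamma_1\in\Gamma(\mathbb{R})$ (degenerate when the corresponding variance parameter vanishes), inverting these two relations forces
$$\mu_0=\frac12(\gamma_0+\varkappa\gamma_1),\qquad \mu_1=\frac12(\gamma_0-\varkappa\gamma_1),$$
which are genuine finite signed measures on $\mathbb{R}$. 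This simultaneously establishes the existence of a signed measure with $\hat\mu=\phi$ and exhibits $\mu$ concretely, in the spirit of the ``work with the distributions, not only the characteristic functions'' method announced in the introduction.

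Next I would characterise when $\mu$ is a probability distribution. A measure on $\mathbb{R}\times\mathbb{Z}(2)$ is a distribution iff both fiber components $\mu_0,\mu_1$ are nonnegative and the total mass equals $1$; the latter is automatic because $\hat\mu(0,0)=\phi(0,0)=1$. Using the symmetry $\varkappa\mapsto-\varkappa$, which merely interchanges $\mu_0$ and $\mu_1$, I may assume $\varkappa\ge0$, so that $\mu_0\ge0$ holds trivially and the entire question collapses to the nonnegativity of $\mu_1$. When $\sigma,\sigma'>0$ the measures $\gamma_0,\gamma_1$ are absolutely continuous with strictly positive densities $p_0,p_1$, and $\mu_1\ge0$ is then equivalent to the pointwise inequality $p_0(t)\ge\varkappa\,p_1(t)$ for all $t$, that is, to $\varkappa\le\inf_t p_0(t)/p_1(t)$.

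The analytic heart is the evaluation of this infimum. Writing $p_0(t)/p_1(t)=\sqrt{\sigma'/\sigma}\,\exp\{f(t)\}$ with $f(t)=-(t-\mathfrak{m})^2/(4\sigma)+(t-\mathfrak{m}')^2/(4\sigma')$, the sign of the leading coefficient of the quadratic $f$ is the sign of $\sigma-\sigma'$. If $\sigma'<\sigma$ then $f$ attains a finite minimum at $t^\ast=(\sigma\mathfrak{m}'-\sigma'\mathfrak{m})/(\sigma-\sigma')$, and a direct computation gives $\min_t f=-(\mathfrak{m}-\mathfrak{m}')^2/(4(\sigma-\sigma'))$, so that $\inf_t p_0(t)/p_1(t)$ is exactly the bound appearing in (\ref{08.01.8}); this produces the first alternative. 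If $\sigma=\sigma'$ and $\mathfrak{m}=\mathfrak{m}'$ then $f\equiv0$, the infimum equals $1$, and the condition becomes $|\varkappa|\le1$, which is (\ref{08.01.9}). In every remaining nondegenerate configuration ($\sigma'>\sigma$, or $\sigma=\sigma'$ with $\mathfrak{m}\ne\mathfrak{m}'$) the quadratic $f$ is unbounded below, the infimum is $0$, and nonnegativity forces $\varkappa=0$; the degenerate cases $\sigma=0$ or $\sigma'=0$ are handled separately, since a point mass cannot dominate a nontrivial Gaussian, leaving only $\sigma=\sigma'=0$, $\mathfrak{m}=\mathfrak{m}'$, which again falls under (\ref{08.01.9}). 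I expect the bookkeeping of these boundary and degenerate cases, rather than the minimisation itself, to be the part demanding the most care.

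Finally, for the last assertion I would observe that under (\ref{08.01.9}) the characteristic function factors as $\phi(s,n)=\exp\{-\sigma s^2+i\mathfrak{m}s\}\,\psi(n)$, where $\psi(0)=1$ and $\psi(1)=\varkappa$. The first factor is the characteristic function of a Gaussian $\gamma\in\Gamma(\mathbb{R})$, while $\psi$ is the characteristic function of $\rho=\frac{1+\varkappa}{2}E_0+\frac{1-\varkappa}{2}E_1$ on $\mathbb{Z}(2)$, which is a probability distribution precisely because $|\varkappa|\le1$. Hence $\hat\mu=\hat\gamma\,\hat\rho=\widehat{\gamma*\rho}$, and by the uniqueness of the Fourier transform on finite signed measures, $\mu=\gamma*\rho\in\Gamma(\mathbb{R})*\mathrm{M}^1(\mathbb{Z}(2))$, as claimed.
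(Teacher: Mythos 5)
Your proof is correct and takes essentially the same approach as the source: the paper itself only cites this lemma from the literature, but the computation it reproduces in step 1 of the proof of Lemma \ref{le17.2} is exactly your pointwise density-ratio minimization, yielding the same minimum $\sqrt{\sigma'/\sigma}\exp\left\{-\frac{(\mathfrak{m}-\mathfrak{m}')^2}{4(\sigma-\sigma')}\right\}$. Your fiber decomposition $\mu_0=\frac12(\gamma_0+\varkappa\gamma_1)$, $\mu_1=\frac12(\gamma_0-\varkappa\gamma_1)$ and the factorization $\mu=\gamma*\rho$ under (\ref{08.01.9}) likewise match the intended argument.
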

\begin{definition}[\kern-0.55ex{\protect \cite[Definition 2.1]{POTA}}]\label{de1} Let 
$\mu\in \textup{M}^1(\mathbb{R}\times \mathbb{Z}(2))$. We say that    
$\mu\in\Theta$  if $\hat\mu(s, n)=\phi(s, n)$, $s\in \mathbb{R}$,   
$n\in \mathbb{Z}(2)$, where  the function   $\phi(s, n)$ 
is represented in the form 
$(\ref{21.01.1})$
  and either $(\ref{08.01.8})$ or 
$(\ref{08.01.9})$ are satisfied.
\end{definition}

Let $\alpha\in {\rm Aut}(\mathbb{R}\times \mathbb{Z}(2))$. 
It is obvious that
 $\alpha$ is of the form
 $\alpha(t, m)=(a t, m)$, $t\in \mathbb{R}$,   $m\in \mathbb{Z}(2)$,  
 where $a\in \mathbb{R}$, $a\ne 0$.
  Observe also that 
$\widetilde \alpha$ is of the form 
$\widetilde \alpha(s, n)=(a s, n)$, $s\in \mathbb{R}$,   $n\in \mathbb{Z}(2)$.
The following lemma is an analogue of Heyde's theorem for the 
group $\mathbb{R}\times \mathbb{Z}(2)$.

\begin{lemma}[\kern-0.55ex{\protect \cite[Theorem 2.1]{POTA}, 
see also \cite[Theorem 11.6]{Febooknew}}]\label{le3}   Consider a topological automorphism $\alpha$  of the group 
$\mathbb{R}\times \mathbb{Z}(2)$ of the form 
$\alpha(t, m)=(at, m)$, $t\in \mathbb{R}$, $m\in\mathbb{Z}(2)$,  where 
 $a\ne -1$.
Let $\xi_j$, $j=1, 2$, be
independent random variables with values in    
$\mathbb{R}\times \mathbb{Z}(2)$ and distributions $\mu_j$ with 
nonvanishing characteristic functions.
If the conditional distribution of the linear form
$L_2 = \xi_1 + \alpha\xi_2$ given 
$L_1 = \xi_1 + \xi_2$ is symmetric, then 
$\mu_{j}\in\Theta$, $j=1, 2$.
\end{lemma}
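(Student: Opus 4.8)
The plan is to begin with Lemma \ref{le1}, which converts the symmetry hypothesis into the functional equation (\ref{03.01.1}) for $\hat\mu_1,\hat\mu_2$ on the character group $Y=\mathbb{R}\times\mathbb{Z}(2)$. Writing $u=(s,n)$, $v=(r,k)$ and using $\widetilde\alpha(r,k)=(ar,k)$ together with $-k=k$ in $\mathbb{Z}(2)$, all four characters appearing in (\ref{03.01.1}) share the same second coordinate $n+k$. Hence (\ref{03.01.1}) decouples into two scalar equations, one for each value of $m:=n+k\in\mathbb{Z}(2)$:
\begin{equation*}
\hat\mu_1(s+r,m)\,\hat\mu_2(s+ar,m)=\hat\mu_1(s-r,m)\,\hat\mu_2(s-ar,m),\qquad s,r\in\mathbb{R}.
\end{equation*}
Thus it suffices to analyse, on each sheet $m=0$ and $m=1$ separately, the classical Heyde equation on $\mathbb{R}$ for the nonvanishing continuous functions $s\mapsto\hat\mu_j(s,m)$.

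Next I would assume $a\neq\pm 1$ (the value $a=-1$ is excluded by hypothesis, and $a=1$ is treated at the end). Since the functions $\hat\mu_j(\cdot,m)$ are continuous, nonvanishing, and $\mathbb{R}$ is simply connected, I would pass to continuous logarithms $\psi_j^{(m)}=\log\hat\mu_j(\cdot,m)$, turning each sheet equation into the additive equation $\psi_1^{(m)}(s+r)+\psi_2^{(m)}(s+ar)=\psi_1^{(m)}(s-r)+\psi_2^{(m)}(s-ar)$. The main obstacle is the purely analytic fact that, precisely because $a\neq\pm1$, every continuous solution of this equation is a polynomial of degree at most $2$; this is established by a finite-difference argument, the point being that the four linear forms $s\pm r$ and $s\pm ar$ are pairwise independent exactly when $a\neq 0,\pm1$, which is what forces the higher differences to vanish. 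Consequently $\hat\mu_j(s,0)=\exp(\beta_j s^2+\gamma_j s)$ and $\hat\mu_j(s,1)=\exp(\beta_j's^2+\gamma_j's+\delta_j')$ for suitable complex constants.

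To pin down these constants I would invoke two elementary properties of characteristic functions on $\mathbb{R}\times\mathbb{Z}(2)$: the Hermitian identity $\overline{\hat\mu_j(s,n)}=\hat\mu_j(-s,n)$ (valid since $-(s,n)=(-s,n)$) and the bound $|\hat\mu_j|\le 1$. The first forces $\beta_j,\beta_j'$ real and $\gamma_j,\gamma_j'$ purely imaginary, and makes $\varkappa_j:=e^{\delta_j'}=\hat\mu_j(0,1)$ real; the second yields $-\beta_j=\sigma_j\ge 0$, $-\beta_j'=\sigma_j'\ge 0$ and $|\varkappa_j|\le 1$. Hence each $\hat\mu_j$ has exactly the shape (\ref{21.01.1}). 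At this stage the distributional constraints come for free: since $\mu_j$ is a genuine probability measure whose characteristic function is of the form (\ref{21.01.1}), Lemma \ref{le19.01.1} forces its parameters to satisfy either (\ref{08.01.8}) or (\ref{08.01.9}), which by Definition \ref{de1} is precisely the assertion $\mu_j\in\Theta$. Delegating positivity to Lemma \ref{le19.01.1} is what keeps the argument short, and it is also where the new class $\Theta$ (with $\sigma'<\sigma$ and a constrained $|\varkappa|$) genuinely appears.

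Finally I would dispose of the leftover case $a=1$, where $\alpha=I$ and the sheet equations degenerate to $\hat\mu_1(s,m)\hat\mu_2(s,m)=\mathrm{const}$. For $m=0$ this reads $\hat\mu_1(s,0)\hat\mu_2(s,0)\equiv 1$; as $\hat\mu_j(\cdot,0)$ is the characteristic function of the $\mathbb{R}$-marginal of $\mu_j$ and $|\hat\mu_j(\cdot,0)|\le 1$, both marginals must have modulus-one characteristic functions and hence be degenerate. Then each $\mu_j$ is concentrated on a single coset $\{t_j\}\times\mathbb{Z}(2)$, so $\hat\mu_j$ is of the form (\ref{21.01.1}) with $\sigma_j=\sigma_j'=0$, $\mathfrak{m}_j=\mathfrak{m}_j'=t_j$ and $|\varkappa_j|\le 1$; thus (\ref{08.01.9}) holds and $\mu_j\in\Theta$, which completes the argument.
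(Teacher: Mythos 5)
The paper does not prove Lemma \ref{le3}: it is imported verbatim from \cite[Theorem 2.1]{POTA} (see also \cite[Theorem 11.6]{Febooknew}), so there is no internal proof to compare against. Your argument is, however, correct and self-contained, and it follows what is essentially the standard route: Lemma \ref{le1} converts the symmetry hypothesis into equation (\ref{03.01.1}); since $-k=k$ in $\mathbb{Z}(2)$ the equation decouples over the two sheets $n+k=0,1$; on each sheet the logarithmic form of the equation involves the four shifts $s\pm r$, $s\pm ar$ with pairwise distinct coefficients $1,-1,a,-a$ precisely when $a\ne 0,\pm 1$, and three finite differences then isolate each $\psi_j$ and force it to be a polynomial of degree at most $2$; the Hermitian symmetry $\overline{\hat\mu_j(s,n)}=\hat\mu_j(-s,n)$ and the bound $|\hat\mu_j|\le 1$ reduce this to the form (\ref{21.01.1}); and delegating the positivity constraints to Lemma \ref{le19.01.1} is exactly the intended use of that lemma and is what produces (\ref{08.01.8})--(\ref{08.01.9}), i.e.\ $\mu_j\in\Theta$. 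Your separate treatment of $a=1$ (where $\hat\mu_1(\cdot,0)\hat\mu_2(\cdot,0)\equiv 1$ forces degenerate $\mathbb{R}$-marginals and hence case (\ref{08.01.9}) with $\sigma_j=\sigma_j'=0$) is also right. Two small points you should make explicit if you write this up: the additive equation for the logarithms a priori holds only modulo $2\pi i$, and one must fix the integer by continuity and evaluation at $s=r=0$; and the finite-difference step should record that the increments produced at each stage, being nonzero multiples of a free parameter, range over all of $\mathbb{R}$, so that the final relation $\Delta_{h_1}\Delta_{h_2}\Delta_{h_3}\psi_j\equiv 0$ holds for arbitrary increments. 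Neither is a gap, only a standard detail.
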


 Denote by 
$(t,   m, g)$, where $t\in \mathbb{R}$,   $m\in \mathbb{Z}(2)$, $g\in G$,
elements of the group $\mathbb{R}\times \mathbb{Z}(2)\times G$, where $G$ is a finite Abelian group. 
Take $\sigma> 0$ and $\mathfrak{m}\in\mathbb{R}$.  Denote by $\gamma_{\sigma, \mathfrak{m}}$ the Gaussian distribution on the group
$\mathbb{R}$ with the density 
\begin{equation}\label{y5}
\rho_{\sigma, \mathfrak{m}}(t)=\frac{1}{2\sqrt{\pi \sigma}}
\exp\left\{-\frac{(t-\mathfrak{m})^2}{4\sigma}\right\},
\quad t\in \mathbb{R},
\end{equation}
with respect to the normalized Lebesgue measure.
Then the characteristic function of $\gamma_{\sigma, \mathfrak{m}}$ is of the form
\begin{equation}\label{26.01.1}
\hat\gamma_{\sigma, \mathfrak{m}}(s)=\exp\{-\sigma s^2+i\mathfrak{m} s\}, 
\quad s\in \mathbb{R}.
\end{equation}
\begin{lemma}\label{le17.2} Consider the group $\mathbb{R}\times \mathbb{Z}(2)\times G$, where $G$ is a finite Abelian group. Let $0<\sigma'<\sigma$   and   
$p=(0, 1, 0)$ be the element of order $2$ of the  subgroup
 $\mathbb{Z}(2)$. Consider the signed measure
$$
\lambda=\frac{1}{2}\left(\gamma_{\sigma, \mathfrak{m}}+
\gamma_{\sigma', \mathfrak{m}'}\right)+\frac{1}{2}\left(\gamma_{\sigma, \mathfrak{m}}-
\gamma_{\sigma', \mathfrak{m}'}\right)*E_p 
$$
on the subgroup   
$\mathbb{R}\times \mathbb{Z}(2)$ and the distribution 
$$
\tau=\sum_{g_i\in G} a_iE_{g_i}+\sum_{g_i\in G} b_iE_{g_i+p},
$$
where $a_i\ge 0$,  $b_i\ge 0$, on the subgroup $\mathbb{Z}(2)\times G$. The convolution
$\lambda*\tau$ is a distribution if and only if the inequalities
$$
\left|\frac{a_i-b_i}{a_i+b_i}\right|\le\sqrt\frac{\sigma'}{\sigma}\exp\left\{-\frac{(\mathfrak{m}-\mathfrak{m}')^2}
{4(\sigma-\sigma')}\right\} 
$$
are true for all $a_i$, $b_i$ such that $a_i+b_i>0$.
\end{lemma}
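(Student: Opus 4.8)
The plan is to use the direct product structure of $\mathbb{R}\times\mathbb{Z}(2)\times G$ to split $\lambda*\tau$ into pieces supported on the cosets of $G$, and to reduce the positivity of each piece to Lemma \ref{le19.01.1}.

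First I would record that $\lambda$ is concentrated on $\mathbb{R}\times\mathbb{Z}(2)\times\{0\}$ while $\tau$ is concentrated on $\{0\}\times\mathbb{Z}(2)\times G$, so that the $G$-coordinate of the convolution comes only from $\tau$. Writing $E_{g_i+p}=E_p*E_{(0,0,g_i)}$, one obtains
\[
\lambda*\tau=\sum_{g_i\in G}\bigl(a_i\lambda+b_i\,\lambda*E_p\bigr)*E_{(0,0,g_i)}.
\]
For distinct $g_i$ the summands are supported on the pairwise disjoint cosets $\mathbb{R}\times\mathbb{Z}(2)\times\{g_i\}$, which partition the whole group, and translation by $E_{(0,0,g_i)}$ does not affect positivity. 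Hence $\lambda*\tau$ is a positive measure if and only if every signed measure $\mu_i:=a_i\lambda+b_i\,\lambda*E_p$ on $\mathbb{R}\times\mathbb{Z}(2)$ is positive, and only the indices with $a_i+b_i>0$ matter (the rest give the zero measure). Since $\hat\lambda(0,0)=1$ and $\tau$ is a probability measure, the total mass of $\lambda*\tau$ equals $1$, so being a distribution is the same as being positive.

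Next I would compute $\hat\mu_i$ on the character group of $\mathbb{R}\times\mathbb{Z}(2)$. Using (\ref{26.01.1}) and that the character $(s,n)$ acts on $(t,m)$ by $e^{ist}(-1)^{mn}$, one finds $\hat\lambda(s,0)=\hat\gamma_{\sigma,\mathfrak{m}}(s)$, $\hat\lambda(s,1)=\hat\gamma_{\sigma',\mathfrak{m}'}(s)$ and $\widehat{\lambda*E_p}(s,n)=(-1)^n\hat\lambda(s,n)$, whence
\[
\hat\mu_i(s,0)=(a_i+b_i)\exp\{-\sigma s^2+i\mathfrak{m}s\},\qquad
\hat\mu_i(s,1)=(a_i-b_i)\exp\{-\sigma's^2+i\mathfrak{m}'s\}.
\]
Dividing by $a_i+b_i$, the signed measure $(a_i+b_i)^{-1}\mu_i$ is precisely the measure of Lemma \ref{le19.01.1} whose characteristic function has the form (\ref{21.01.1}) with the given $\sigma,\sigma',\mathfrak{m},\mathfrak{m}'$ and with $\varkappa=(a_i-b_i)/(a_i+b_i)$.

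Finally I would apply Lemma \ref{le19.01.1}. Since $a_i+b_i>0$, the measure $\mu_i$ is positive exactly when $(a_i+b_i)^{-1}\mu_i$ is a distribution. Because $0<\sigma'<\sigma$, condition (\ref{08.01.9}) (which forces $\sigma=\sigma'$) cannot hold, so Lemma \ref{le19.01.1} shows that $(a_i+b_i)^{-1}\mu_i$ is a distribution if and only if (\ref{08.01.8}) holds, i.e. if and only if $|\varkappa|\le\sqrt{\sigma'/\sigma}\exp\{-(\mathfrak{m}-\mathfrak{m}')^2/(4(\sigma-\sigma'))\}$; the borderline case $\varkappa=0$, that is $a_i=b_i$, gives the obviously positive measure $\gamma_{\sigma,\mathfrak{m}}$ spread uniformly over $\mathbb{Z}(2)$ and trivially satisfies the inequality, so it fits the same formula. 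Substituting back $\varkappa=(a_i-b_i)/(a_i+b_i)$ and quantifying over all $i$ with $a_i+b_i>0$ yields the claimed equivalence. The step needing the most care is the bookkeeping of the second paragraph—verifying that global positivity really reduces to slice-by-slice positivity—together with the consistent treatment of the $\varkappa=0$ boundary case excluded by (\ref{08.01.8}); the computation of $\hat\mu_i$ and the direct appeal to Lemma \ref{le19.01.1} are then routine.
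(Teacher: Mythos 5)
Your proposal is correct and follows essentially the same route as the paper: both decompose $\lambda*\tau$ over the disjoint cosets $\mathbb{R}\times\mathbb{Z}(2)\times\{g_i\}$, reduce global positivity to the positivity of the signed measure $(a_i+b_i)\gamma_{\sigma,\mathfrak{m}}\pm(a_i-b_i)\gamma_{\sigma',\mathfrak{m}'}$ on each slice, and conclude via the criterion $|\varkappa|\le\sqrt{\sigma'/\sigma}\exp\{-(\mathfrak{m}-\mathfrak{m}')^2/(4(\sigma-\sigma'))\}$. The only difference is that you obtain this criterion by citing Lemma \ref{le19.01.1} (correctly noting and patching its $\varkappa=0$ boundary case), whereas the paper re-derives it by the direct density computation from that lemma's proof.
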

\begin{proof}  We split the proof of the lemma into two steps. 

1. Take $\varkappa>0$ and verify that the signed measure
$$\gamma_{\sigma, \mathfrak{m}}-\varkappa\gamma_{\sigma', \mathfrak{m}'}
$$
is a measure if and only if the inequality
$$
\varkappa\le\sqrt\frac{\sigma'}{\sigma}
\exp\left\{-\frac{(\mathfrak{m}-\mathfrak{m}')^2}
{4(\sigma-\sigma')}\right\}  
$$
is fulfilled. For the proof, we follow Lemma 4.1 in 
\cite{F_solenoid} (see also \cite[Lemma 11.1]{Febooknew}).

In view of (\ref{y5}), the signed measure 
$\gamma_{\sigma, \mathfrak{m}}-\varkappa\gamma_{\sigma', \mathfrak{m}'}$ is 
a measure if and only if the inequality
$$
\frac{1}{2\sqrt{\pi \sigma}}
\exp\left\{-\frac{(t-\mathfrak{m})^2}{4\sigma}\right\}
-\frac{\varkappa}{2\sqrt{\pi \sigma'}}
\exp\left\{-\frac{(t-\mathfrak{m}')^2}{4\sigma'}\right\}\ge 0
$$
holds for all $t\in \mathbb{R}$.
This inequality is equivalent to the following
\begin{equation}\label{25.01.2}
\varkappa\le\sqrt\frac{\sigma'}{\sigma}
\exp\left\{-\frac{(t-\mathfrak{m})^2}{4\sigma}
+\frac{(t-\mathfrak{m}')^2}{4\sigma'}\right\}, 
\quad t\in \mathbb{R}.
\end{equation}
It is easy to see that the minimum of the function in the right-hand 
 side of inequality (\ref{25.01.2})
 is equal to  
 $$
 \sqrt\frac{\sigma'}{\sigma}
 \exp\left\{-\frac{(\mathfrak{m}-\mathfrak{m}')^2}{4(\sigma-\sigma')}\right\}.
 $$  
The required assertion follows from this.

2. Taking into account that $E_p*E_p=E_{2p}=E_0$, we have
\begin{multline}\label{19.01.1}
\lambda*\tau=\left(\frac{1}{2}\left(\gamma_{\sigma, \mathfrak{m}}+
\gamma_{\sigma', \mathfrak{m}'}\right)+\frac{1}{2}\left(\gamma_{\sigma, \mathfrak{m}}-
\gamma_{\sigma', \mathfrak{m}'}\right)*E_p\right)*
\left(\sum_{g_i\in G} a_iE_{g_i}+\sum_{g_i\in G} b_iE_{g_i+p}\right)\\=\frac{1}{2}\sum_{g_i\in G}\left((a_i+b_i)
\gamma_{\sigma, \mathfrak{m}}-(b_i-a_i)\gamma_{\sigma', \mathfrak{m}'}\right)*E_{g_i}
\\+\frac{1}{2}\sum_{g_i\in G}\left((a_i+b_i)
\gamma_{\sigma, \mathfrak{m}}-(a_i-b_i)\gamma_{\sigma', \mathfrak{m}'}\right)*E_{g_i+p}. 
\end{multline}
Fix an element $g_i\in G$ and take a Borel subset $E$ of $\mathbb{R}$. 
Suppose $a_i+b_i>0$. It follows from (\ref{19.01.1}) that
$$
(\lambda*\tau)(E\times\{g_i\})=\frac{a_i+b_i}{2}\left(
\gamma_{\sigma, \mathfrak{m}}-\frac{b_i-a_i}{a_i+b_i}\gamma_{\sigma', \mathfrak{m}'}\right)(E)
$$
and
$$
(\lambda*\tau)(E\times\{g_i+p\})=\frac{a_i+b_i}{2}\left(
\gamma_{\sigma, \mathfrak{m}}-\frac{a_i-b_i}{a_i+b_i}\gamma_{\sigma', \mathfrak{m}'}\right)(E).
$$
The statement of the lemma follows from the assertion proved in item 1.
\end{proof}

Let $K$ be a LCA group. 
Consider the group   $X=\mathbb{R}\times K$. 
 The character group $Y$ of the group $X$ is topologically isomorphic 
 of the group $\mathbb{R}\times L$, where $L$ is the
character group of the group $K$. We denote by $y=(s,   l)$, where $s\in \mathbb{R}$,   $l\in L$, elements of the group $Y$.
\begin{lemma}[\kern-0.55ex{\protect \cite[Lemma 6.9]{Fe9}}]\label{le4}   
Consider a group $\mathbb{R}\times K$, where $K$ is a LCA group. 
Take $\mu\in \textup{M}^1(\mathbb{R}\times K)$  
	and suppose that  $\hat\mu(s, 0)$ is an entire
	function in $s$. This implies that $\hat\mu(s, l)$ is an entire function in
	$s$ for each fixed  $l\in L$ and the inequality
\begin{equation}\label{14.03.1}
\max_{s\in \mathbb{C}, \ |s|\le r}|\hat\mu(s, l)|\le 
	\max_{s\in \mathbb{C}, \ |s|\le r}|\hat\mu(s, 0)|, 
	\quad l\in L,
\end{equation}	
holds.
\end{lemma}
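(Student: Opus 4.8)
The plan is to reduce everything to the one-dimensional marginal of $\mu$ on $\mathbb{R}$ and then invoke the classical theory of entire characteristic functions. Writing elements of $X=\mathbb{R}\times K$ as $(t,k)$ with $t\in\mathbb{R}$, $k\in K$, the value of a character $(s,l)\in Y=\mathbb{R}\times L$ at $(t,k)$ equals $e^{ist}(k,l)$, so that
$$
\hat\mu(s,l)=\int_{\mathbb{R}\times K}e^{ist}(k,l)\,d\mu(t,k),\qquad s\in\mathbb{R},\ l\in L.
$$
Taking $l=0$ and using $(k,0)=1$ gives $\hat\mu(s,0)=\int_{\mathbb{R}}e^{ist}\,d\nu(t)=\hat\nu(s)$, where $\nu$ is the image of $\mu$ under the projection $\mathbb{R}\times K\to\mathbb{R}$. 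Thus the hypothesis says precisely that the characteristic function $\hat\nu$ of the distribution $\nu$ on $\mathbb{R}$ is entire.

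The first, and main, step is to convert this into an exponential-integrability statement. By the classical theory of entire characteristic functions, $\hat\nu$ is entire if and only if $\int_{\mathbb{R}}e^{ct}\,d\nu(t)<\infty$ for every real $c$, and in that case its entire continuation is given by $\hat\nu(z)=\int_{\mathbb{R}}e^{izt}\,d\nu(t)$, $z\in\mathbb{C}$. I would quote this fact; everything afterwards is elementary. In particular, for every $R>0$ the function $t\mapsto e^{R|t|}$ is $\nu$-integrable, being dominated by $e^{Rt}+e^{-Rt}$.

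Next I would define, for each fixed $l$, the candidate continuation
$$
\hat\mu(z,l)=\int_{\mathbb{R}\times K}e^{izt}(k,l)\,d\mu(t,k),\qquad z\in\mathbb{C}.
$$
For $|z|\le R$ the integrand is bounded in modulus by $|e^{izt}|=e^{-t\,\mathrm{Im}\,z}\le e^{R|t|}$ (using $|(k,l)|=1$), which is $\mu$-integrable by the previous step; hence the integral converges absolutely and locally uniformly in $z$. This yields continuity of $z\mapsto\hat\mu(z,l)$, and an application of Fubini's theorem together with Morera's theorem — integrating over an arbitrary closed contour and using that $z\mapsto e^{izt}$ is entire, so the inner contour integral vanishes — shows that $\hat\mu(z,l)$ is entire. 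It agrees with $\hat\mu(s,l)$ for real $z=s$, so it is the required continuation.

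Finally, the inequality (\ref{14.03.1}) follows from a single modulus estimate. For any $z\in\mathbb{C}$, since $|(k,l)|=1$,
$$
|\hat\mu(z,l)|\le\int_{\mathbb{R}\times K}|e^{izt}|\,|(k,l)|\,d\mu(t,k)=\int_{\mathbb{R}}e^{-t\,\mathrm{Im}\,z}\,d\nu(t)=\hat\mu(i\,\mathrm{Im}\,z,\,0),
$$
the last quantity being a nonnegative real number. If $|z|\le r$, then $|i\,\mathrm{Im}\,z|=|\mathrm{Im}\,z|\le|z|\le r$, so the point $i\,\mathrm{Im}\,z$ lies in the disk $|w|\le r$ and therefore $\hat\mu(i\,\mathrm{Im}\,z,0)\le\max_{|w|\le r}|\hat\mu(w,0)|$. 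Taking the maximum over $|z|\le r$ yields (\ref{14.03.1}). The only genuine input is the entire-characteristic-function theorem of the first step; the observation that makes the inequality painless is that every character factor $(k,l)$ has modulus $1$, so $|\hat\mu(z,l)|$ is always controlled by the value of $\hat\mu(\cdot,0)$ at the purely imaginary point $i\,\mathrm{Im}\,z$, which never leaves the disk of radius $r$.
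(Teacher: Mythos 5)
Your proof is correct, and since the paper only cites this lemma from \cite[Lemma 6.9]{Fe9} without reproducing an argument, there is nothing in the text to diverge from; your route (reduction to the real marginal $\nu$, the classical equivalence of entireness of $\hat\nu$ with finiteness of all exponential moments, Morera--Fubini for the extension, and the ridge estimate $|\hat\mu(z,l)|\le\hat\mu(i\,\mathrm{Im}\,z,0)$ combined with $|\mathrm{Im}\,z|\le|z|\le r$) is precisely the standard proof of this statement.
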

\begin{lemma}[\kern-0.55ex{\protect \cite[Remark 4.1]{F_solenoid}, 
see also \cite[Proposition 11.9]{Febooknew}}]\label{le5}   
Assume that $X$ is a second countable LCA group containing exactly 
one element  
of order  $2$. Denote by $P$ the subgroup of $X$ generated by this element.  
Let $\xi_j$, $j=1, 2$, be independent random variables
with values in the group
       $X$  and distributions
  $\tau_j$ with nonvanishing characteristic functions.
Then the following statements are equivalent:
\renewcommand{\labelenumi}{\rm(\roman{enumi})}
\begin{enumerate}
  
\item	

the conditional distribution of the 
linear form $L_2=\xi_1-\xi_2$
 given $L_1 = \xi_1 + \xi_2$ is 
symmetric; 

\item

either $\tau_1=\tau_2*\delta_2$ or   $\tau_2=\tau_1*\delta_1$, 
 where  $\delta_j\in \textup{M}^1(P)$, $j=1, 2$.
\end{enumerate}  
\end{lemma}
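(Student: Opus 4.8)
The plan is to translate the symmetry condition into a functional equation for the characteristic functions and then to read the structure of $\tau_1,\tau_2$ off from it. Write $f_j=\hat\tau_j$ and let $p$ denote the element of order $2$, so that $P=\{0,p\}$. The linear form $L_2=\xi_1-\xi_2$ corresponds to the automorphism $\alpha=-I$, for which $\widetilde\alpha=-I$, so Lemma \ref{le1} shows that (i) is equivalent to
\begin{equation*}
f_1(u+v)f_2(u-v)=f_1(u-v)f_2(u+v),\quad u,v\in Y.
\end{equation*}
Since the $f_j$ are nonvanishing, I would set $\psi=f_1/f_2$, a continuous nonvanishing function with $\psi(0)=1$, and rewrite the equation as $\psi(u+v)=\psi(u-v)$ for all $u,v\in Y$.

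Next I would pin down $\psi$. Given $a,b\in Y$ with $a-b=2w\in Y^{(2)}$, the choice $u=b+w$, $v=w$ gives $u+v=a$, $u-v=b$, whence $\psi(a)=\psi(b)$; thus $\psi$ is invariant under $Y^{(2)}$ and, being continuous, under the closed subgroup $\overline{Y^{(2)}}$. Here the hypothesis that $X$ has exactly one element of order $2$ enters: it gives $A(X,Y^{(2)})=\{x\in X:2x=0\}=P$, and by annihilator duality $\overline{Y^{(2)}}=A(Y,P)$, which is the kernel of the nontrivial character $y\mapsto(p,y)\in\{-1,1\}$ and hence has index $2$ in $Y$. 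Consequently $\psi$ takes only the two values $\psi(0)=1$ on $A(Y,P)$ and some constant $c$ on the other coset. Moreover, since $f_j(-y)=\overline{f_j(y)}$ and $-y\equiv y\pmod{Y^{(2)}}$, we get $\psi(-y)=\overline{\psi(y)}=\psi(y)$, so $\psi$ is real-valued and $c\in\mathbb{R}$, with $c\neq 0$ because $\psi$ is nonvanishing. Comparing values on the two cosets yields
\begin{equation*}
\psi(y)=\alpha+\beta(p,y),\qquad \alpha=\tfrac{1+c}{2},\quad \beta=\tfrac{1-c}{2},\quad \alpha+\beta=1.
\end{equation*}

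Now the relation $f_1=\psi f_2=\alpha f_2+\beta(p,\cdot)f_2=\widehat{(\alpha E_0+\beta E_p)*\tau_2}$, together with the injectivity of the Fourier transform on bounded signed measures, gives $\tau_1=(\alpha E_0+\beta E_p)*\tau_2$. The main obstacle is to extract from this a genuine probability distribution on $P$, and this is where a dichotomy appears. If $|c|\le 1$ then $\alpha,\beta\ge 0$, so $\delta_2=\alpha E_0+\beta E_p\in\textup{M}^1(P)$ and $\tau_1=\tau_2*\delta_2$. If $|c|\ge 1$ I would instead invert $\psi$: from $\psi^{-1}(y)=\alpha'+\beta'(p,y)$ with $\alpha'=\tfrac{1+1/c}{2}$, $\beta'=\tfrac{1-1/c}{2}$ one checks $\alpha',\beta'\ge 0$, so $\delta_1=\alpha'E_0+\beta'E_p\in\textup{M}^1(P)$ and $\tau_2=\tau_1*\delta_1$. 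Since every nonzero real $c$ satisfies $|c|\le 1$ or $|c|\ge 1$, statement (ii) follows.

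For the converse, assume say $\tau_1=\tau_2*\delta_2$ with $\delta_2\in\textup{M}^1(P)$. Then $f_1=\hat\delta_2 f_2$ with $\hat\delta_2(y)=\alpha+\beta(p,y)$, and since $(p,u+v)=(p,u)(p,v)=(p,u-v)$ (as $(p,v)=\pm1$ is real), the multiplier $\hat\delta_2$ is $Y^{(2)}$-invariant; substituting into the displayed functional equation shows it holds, so (i) follows from Lemma \ref{le1}. The case $\tau_2=\tau_1*\delta_1$ is identical.
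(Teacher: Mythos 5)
The paper does not prove Lemma \ref{le5}; it is imported from \cite[Remark 4.1]{F_solenoid} and \cite[Proposition 11.9]{Febooknew} without proof. Your argument is correct and self-contained, and it follows essentially the route of the cited source: Lemma \ref{le1} reduces (i) to the functional equation, the ratio $\hat\tau_1/\hat\tau_2$ is shown to be invariant under $Y^{(2)}$ and hence constant on the two cosets of $\overline{Y^{(2)}}=A(Y,P)$ (index $2$ precisely because $X$ has exactly one element of order $2$), the realness of the ratio follows from $\hat\tau_j(-y)=\overline{\hat\tau_j(y)}$, and the dichotomy $|c|\le 1$ versus $|c|\ge 1$ decides which of $\tau_1,\tau_2$ absorbs the probability measure $\alpha E_0+\beta E_p$ on $P$. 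All steps, including the converse, check out.
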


\section { Main theorem}

Assume that $X$ is a second countable LCA group  and 
 $\alpha_j, \beta_j$, $j=1, 2$, are   topological automorphisms  of   $X$.
Let
$\xi_j$  be independent random variables with values in
the group $X$.  
Suppose that the conditional distribution of the linear form 
 $L_2 = \beta_1\xi_1 + \beta_2\xi_2$ given 
 $L_1 = \alpha_1\xi_1 + \alpha_2\xi_2$ is symmetric.
 It is easy to see that  studying the possible distributions of $\xi_j$, 
 we can assume that 
 $L_1 = \xi_1 + \xi_2$, $L_2 =  \xi_1 + \alpha\xi_2$, 
 where $\alpha\in{\rm Aut}(X)$.
 
 Consider a group $X$ of the form $X=\mathbb{R}\times F$,
where $F$ is a finite Abelian group such that its 2-component is isomorphic to 
$\mathbb{Z}(2)$.
Since a finite Abelian group is isomorphic to a direct product of
its $p$-components, the group $F$ is isomorphic to a group of the form $\mathbb{Z}(2)\times G$, 
where $G$ is a finite Abelian group containing no elements of order 2. So, we will 
 assume, without loss of generality, that $X=\mathbb{R}\times \mathbb{Z}(2)\times G$.
 Let $\alpha$ be a topological automorphism of 
the group $X$. Since $\mathbb{R}$, $\mathbb{Z}(2)$, and 
$G$ are characteristic subgroups of the group $X$,   $\alpha$ acts on the elements of the group $X$ as follows: $\alpha(t, m, g)=(at, m, \alpha_Gg)$, $t\in \mathbb{R}$, $m\in\mathbb{Z}(2)$, $g\in G$, where $a\in \mathbb{R}$, $a\ne 0$, and we will write $\alpha=(a, I, \alpha_G)$. The character group $Y$ of the group $X$ is topologically isomorphic of the group $\mathbb{R}\times \mathbb{Z}(2)\times H$, where $H$ is the
character group of the group $G$. We denote by $y=(s, n, h)$, where $s\in \mathbb{R}$, $n\in\mathbb{Z}(2)$, $h\in H$, elements of the group $Y$.
The value of a character $y=(s, n, h)$ at an element
$x=(t, m, g)$   is defined by the formula $$((t, m, g),(s, n, h))= 
e^{its}(-1)^{mn}(g,h).$$ 
 
The aim of the article is to prove the following group analogue of Heyde's 
theorem.
\begin{theorem}\label{th1} Assume that $X=\mathbb{R}\times \mathbb{Z}(2)\times G$, 
where $G$ is a finite Abelian group containing no elements of order  $2$.
Let  $\alpha=(a, I, \alpha_G)$ be a topological automorphism of 
the group $X$.  Set $K={\rm Ker}(I+\alpha_{G})$.
Let $\xi_j$, $j=1, 2$, be
independent random variables with values in   $X$ and distributions
$\mu_j$ with nonvanishing characteristic functions. Assume that the conditional  distribution of the linear form $L_2 = \xi_1 + \alpha\xi_2$ given $L_1 = \xi_1 + \xi_2$ is symmetric. Then the following statements are true.
\renewcommand{\labelenumi}{\rm{\Roman{enumi}.}}
\begin{enumerate}
  
\item	

If  $a \ne-1$, then $\mu_j=\gamma_j*\omega_j*E_{g_j}$, where $\gamma_j$ are distributions of the class $\Theta$ on the subgroup   
$\mathbb{R}\times \mathbb{Z}(2)$, $\omega_j$ are distributions on the subgroup    
$\mathbb{Z}(2)\times K$ and either 
$\omega_1=\omega_2*\vartheta_2$ or   $\omega_2=\omega_1*\vartheta_1$, 
 where  $\vartheta_j\in \mathrm{M}^1(\mathbb{Z}(2))$, $g_j\in G$. 

\item

If  $a=-1$, then 
$\mu_j=\omega_j*E_{x_j}$, where  $\omega_j$ are distributions on  
the subgroup $\mathbb{R}\times\mathbb{Z}(2)\times K$  and either 
$\omega_1=\omega_2*\vartheta_2$ or   $\omega_2=\omega_1*\vartheta_1$, 
 where  $\vartheta_j\in \mathrm{M}^1(\mathbb{Z}(2))$, $g_j\in G$, $j=1, 2$. 
\end{enumerate}
\end{theorem}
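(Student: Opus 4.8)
The plan is to translate the symmetry hypothesis into the functional equation (\ref{03.01.1}) of Lemma~\ref{le1} on the character group $Y=\mathbb{R}\times\mathbb{Z}(2)\times H$ and to peel off the three factors of $X$ one at a time. First I would restrict (\ref{03.01.1}) to the annihilator of $\mathbb{R}\times\mathbb{Z}(2)$, i.e. set $s=0$, $n=0$; this is Heyde's equation on $G$ for the $G$-marginals, so Lemma~\ref{le2} applies and gives $\hat\mu_j(0,0,h)=\hat{\omega}^G(h)\,(g_j,h)$ with a single $\omega^G$ supported in $K$. Feeding these back and cancelling $\omega^G$ (which itself satisfies the symmetric equation) forces $(g_1+\alpha_G g_2,h)^2=1$ for all $h$, whence $g_1=-\alpha_G g_2$ since $G$ has no $2$-torsion. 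By Lemma~\ref{le6} each $\mu_j$ is then supported in $\mathbb{R}\times\mathbb{Z}(2)\times(g_j+K)$, so $\mu_j=\tilde\mu_j*E_{(0,0,g_j)}$ with $\tilde\mu_j$ on $\mathbb{R}\times\mathbb{Z}(2)\times K$; the relation $g_1=-\alpha_G g_2$ makes the shift characters cancel, so $\tilde\mu_j$ satisfies (\ref{03.01.1}) for $\alpha=(a,I,-I)$, using $\alpha_G|_K=-I$.

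Case~II is then immediate: when $a=-1$ the automorphism equals $-I$ on $\mathbb{R}\times\mathbb{Z}(2)\times K$, a group with exactly one element of order $2$, the form is $\xi_1-\xi_2$, and Lemma~\ref{le5} gives $\tilde\mu_1=\tilde\mu_2*\vartheta_2$ or $\tilde\mu_2=\tilde\mu_1*\vartheta_1$ with $\vartheta_j\in\mathrm{M}^1(\mathbb{Z}(2))$, which is the assertion with $\omega_j=\tilde\mu_j$, $x_j=(0,0,g_j)$.

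For Case~I ($a\ne-1$) I would first set $h=0$ to obtain from Lemma~\ref{le3} that the marginals $\hat\gamma_j^{(1)}(s,n)=\hat{\tilde\mu}_j(s,n,0)$ lie in $\Theta$, so $\hat{\tilde\mu}_j(s,0,0)=\exp\{-\sigma_j s^2+i\mathfrak{m}_j s\}$ and $\hat{\tilde\mu}_j(s,1,0)=\varkappa_j\exp\{-\sigma_j' s^2+i\mathfrak{m}_j' s\}$ with $\varkappa_j\ne0$. The key point is that in (\ref{03.01.1}) all four factors share the $\mathbb{Z}(2)$-coordinate $n_1+n_2$, so the equation decouples into one equation for $f_j(s,h):=\hat{\tilde\mu}_j(s,0,h)$ and one for $g_j(s,h):=\hat{\tilde\mu}_j(s,1,h)$, each being (\ref{03.01.1}) on $\mathbb{R}\times K$ for $(a,-I)$, and both nonvanishing on the real characters. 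Putting $s_2=0$ and using that multiplication by $2$ is a bijection of $\widehat{K}$ (as $K$ has no $2$-torsion) shows $f_1/f_2$ and $g_1/g_2$ are independent of $h$; writing $R(s,h)=f_j(s,h)/f_j(s,0)$, cancelling the $h=0$ equation, and then putting $h_1=h_2$ reduces the equation to $R(s_1+s_2,2h)=R(s_1-as_2,2h)$. Since $a\ne-1$ the map $(s_1,s_2)\mapsto(s_1+s_2,s_1-as_2)$ is onto, so $R$ is constant in $s$, and likewise for the $g$-quotient. This separates the variables: $\hat{\tilde\mu}_j(s,n,h)=\hat\gamma_j^{(1)}(s,n)\,\Psi(n,h)$ with a single $\Psi$ independent of $j$, i.e. $\tilde\mu_j=\gamma_j^{(1)}*\omega$ for a common \emph{signed} measure $\omega$ on $\mathbb{Z}(2)\times K$.

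The last and hardest step is to upgrade this formal factorization to genuine distributions. Writing $\omega=\sum_k\left(a_kE_{(0,k)}+b_kE_{(1,k)}\right)$, the $n=0$ slot gives $a_k+b_k\ge0$, while positivity of the \emph{measures} $\tilde\mu_j=\gamma_j^{(1)}*\omega$ — computed slice by slice exactly as in Lemma~\ref{le17.2} and Lemma~\ref{le19.01.1} — is equivalent to $|\varkappa_j|\,|a_k-b_k|/(a_k+b_k)\le c_j$ for all $k$, where $c_j=\sqrt{\sigma_j'/\sigma_j}\exp\{-(\mathfrak{m}_j-\mathfrak{m}_j')^2/(4(\sigma_j-\sigma_j'))\}$ (and $c_j=1$ in case (\ref{08.01.9})). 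As $\gamma_j^{(1)}\in\Theta$ gives $|\varkappa_j|\le c_j$, I can replace $\varkappa_j$ by $\varkappa_j^{*}$ with $|\varkappa_j^{*}|\le c_j$, transferring a $\mathbb{Z}(2)$-factor into $\omega$; the displayed inequalities yield $T:=\max_k|a_k-b_k|/(a_k+b_k)\le\min_j c_j/|\varkappa_j|$, which is precisely the feasibility allowing a common choice with each adjusted $\gamma_j\in\Theta$ and the adjusted $\omega\in\mathrm{M}^1(\mathbb{Z}(2)\times K)$. This gives $\mu_j=\gamma_j*\omega*E_{(0,0,g_j)}$ with $\omega_1=\omega_2$, so the alternative in the statement holds with $\vartheta_j=E_0$. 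I expect this positivity bookkeeping — the simultaneous use of the characteristic functions and of the measures themselves through Lemma~\ref{le17.2} — to be the main obstacle, and exactly the place where the element of order $2$ forces the class $\Theta$ into the conclusion.
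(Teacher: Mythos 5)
Your skeleton coincides with the paper's at the beginning and the end: the reduction to $\mathbb{R}\times\mathbb{Z}(2)\times K$ and to $\alpha=(a,I,-I)$ via Lemma \ref{le2}, the case $a=-1$ via Lemma \ref{le5}, the use of Lemma \ref{le3} for the $\mathbb{R}\times\mathbb{Z}(2)$-marginals, and the positivity bookkeeping through the slice computation of Lemma \ref{le17.2}. Where you genuinely diverge is the central step, the factorization $\hat\mu_j(s,n,h)=\hat\mu_j(s,n,0)\Psi(n,h)$. The paper obtains it by extending $\hat\mu_j(s,n,h)$ to entire functions of order at most $2$ (Lemma \ref{le4}), applying Hadamard's factorization theorem, and matching coefficients in (\ref{04.01.1}) and (\ref{04.01.8}). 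You instead use that all four factors of (\ref{03.01.4}) carry the same coordinate $n_1+n_2$, so the equation splits into two equations on $\mathbb{R}\times H$; setting $s_2=0$, $h_1=h_2$ and using $H^{(2)}=H$ shows the ratio $R_n(s,h)=\hat\mu_j(s,n,h)/\hat\mu_j(s,n,0)$ is the same for $j=1,2$, and dividing by the $h_1=h_2=0$ equation and using that $(s_1,s_2)\mapsto(s_1+s_2,s_1-as_2)$ is onto $\mathbb{R}^2$ (determinant $-a-1\ne0$) shows $R_n$ is constant in $s$. This stays entirely on the real character group, needs no complex analysis, and is a genuine simplification; it also makes the final dichotomy nearly automatic, since the common $\Psi(n,h)$ at once gives $\tau_1=\tau_2*\delta$ with $\delta$ a signed measure on $\mathbb{Z}(2)$ determined by $\varkappa_1/\varkappa_2$, whereas the paper derives this relation from Lemma \ref{le5} applied to $\mathbb{Z}(2)\times G$. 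Your feasibility argument at the end is also sound: positivity of $\mu_j$ forces $|\varkappa_j|\,T\le c_j$ with $T=\max_k|a_k-b_k|/(a_k+b_k)>0$, so the interval $\left[\max_j|\varkappa_j|/c_j,\ 1/T\right]$ is nonempty and any real $\rho$ in it yields $\gamma_j\in\Theta$ together with a single distribution $\omega=\omega_1=\omega_2$ on $\mathbb{Z}(2)\times K$ --- in fact a slightly sharper conclusion than the stated either/or alternative, and consistent with the non-uniqueness of the decomposition discussed in Lemma \ref{lenew11} and Proposition \ref{pr1}. What remains to be written out are only the routine subcases $\sigma_j=\sigma_j'$ (where $c_j=1$) and $\sigma_j=0$ (which by (\ref{03.01.8}) is what occurs for $a>0$); both fit your scheme without change.
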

\begin{proof} We split the proof of the theorem into some steps.
 
 1.  By Lemma \ref{le1}, the characteristic functions $\hat\mu_j(y)$ satisfy equation (\ref{03.01.1}) which takes the form
\begin{multline}
\label{03.01.2}
\hat\mu_1(s_1+s_2, n_1+n_2, h_1+h_2)\hat\mu_2(s_1+a s_2, n_1+n_2, h_1+\widetilde\alpha_Gh_2) 
\\=
\hat\mu_1(s_1-s_2, n_1+n_2, h_1-h_2)\hat\mu_2(s_1-a s_2, n_1+n_2, h_1-\widetilde\alpha_Gh_2), \quad s_j\in \mathbb{R}, \  n_j\in\mathbb{Z}(2),  \  h_j\in H.
\end{multline}
Substitute $s_1=s_2=0$ and $n_1=n_2=0$ in equation (\ref{03.01.2}).
Taking into account Lemma  \ref{le1}  and reasoning as in the proof of Theorem 3.1 in \cite{Rima}, it follows from the obtained equation and  
Lemma  \ref{le2}  that there exist elements $g_1, g_2\in G$  such that 
if $\theta_j=\mu_j*E_{-g_j}$ and $\eta_j$ are independent random variables with values in the group $X$ and distributions  $\theta_j$, then the conditional distribution 
of the linear form
$N_2=\eta_1 + \alpha\eta_2$ given $N_1=\eta_1 + \eta_2$ is  symmetric. 
Moreover, $\sigma(\theta_j)\subset\mathbb{R}\times\mathbb{Z}(2)\times K$, $j=1, 2$. 
This means that independent random variables $\eta_j$ take values in $\mathbb{R}\times\mathbb{Z}(2)\times K$.

By the condition of the theorem,  $K={\rm Ker}(I+\alpha_{G})$. Hence
$\alpha_{G}g=-g$ for all $g\in K$, i.e., 
 the restriction of the automorphism   $\alpha_{G}$  to the 
 subgroup $K$ coincides with $-I$. From the above it follows that
the proof of the theorem reduces to the case when the topological automorphism   
$\alpha$   is of the form $\alpha=(a, I, -I)$. 

2. Assume $a\ne -1$. In view of $\alpha=(a, I, -I)$, we can rewrite equation (\ref{03.01.2}) 
in the form
\begin{multline}
\label{03.01.4}
\hat\mu_1(s_1+s_2, n_1+n_2, h_1+h_2)\hat\mu_2(s_1+a s_2, n_1+n_2, h_1-h_2) 
\\=
\hat\mu_1(s_1-s_2, n_1+n_2, h_1-h_2)\hat\mu_2(s_1-a s_2, n_1+n_2, h_1+h_2), 
\quad s_j\in \mathbb{R}, \  n_j\in\mathbb{Z}(2),  \  h_j\in H.
\end{multline}
Substituting $h_1=h_2=0$ in equation (\ref{03.01.4}), we obtain
\begin{multline}
\label{03.01.5}
\hat\mu_1(s_1+s_2, n_1+n_2, 0)\hat\mu_2(s_1+a s_2, n_1+n_2, 0) 
\\=
\hat\mu_1(s_1-s_2, n_1+n_2, 0)\hat\mu_2(s_1-a s_2, n_1+n_2, 0), 
\quad s_j\in \mathbb{R}, \  n_j\in\mathbb{Z}(2).
\end{multline}

Taking into account Lemma \ref{le1}, it follows from
   equation (\ref{03.01.5}) and  Lemma \ref{le3} that the characteristic functions
$\hat\mu_j(s, n, 0)$ are represented in the form
\begin{equation}\label{03.01.7}
\hat\mu_j(s, n, 0) = \begin{cases}\exp\{-\sigma_j s^2+i\mathfrak{m}_j s\}, &\text{\ if\ }
\ \ s\in \mathbb{R}, \ n=0,\\
\varkappa_j\exp\{-\sigma_j' s^2+i\mathfrak{m}_j's\}, &\text{\ if\ }\ \ s\in \mathbb{R}, 
\   n=1 
\\
\end{cases}
\end{equation}
  and either
$$
0<\sigma_j'<\sigma_j, \quad 
0<|\varkappa_j|\le\sqrt\frac{\sigma_j'}{\sigma_j}
\exp\left\{-\frac{(\mathfrak{m}_j-\mathfrak{m}_j')^2}
{4(\sigma_j-\sigma_j')}\right\} 
$$
 or
 $$
\sigma_j=\sigma_j', \quad\mathfrak{m}_j=\mathfrak{m}_j', 
\quad  |\varkappa_j|\le 1, \quad j=1, 2.
$$
Substituting (\ref{03.01.7})  into equation (\ref{03.01.5}), we get 
\begin{equation}\label{03.01.8}
\sigma_1+a\sigma_2=0, \quad \sigma'_1+a\sigma'_2=0, 
\quad \mathfrak{m}_1+a\mathfrak{m}_2=0, \quad \mathfrak{m}'_1+a\mathfrak{m}'_2=0.
\end{equation}
It follows from (\ref{03.01.8}) that either $\sigma_1> 0$, $\sigma_2> 0$ or
$\sigma_1=\sigma_2=0$.
 In view of (\ref{03.01.7}), we have
\begin{equation}\label{10.01.2}
\hat\mu_j(s, 0, 0)=\exp\{-\sigma_j s^2+i\mathfrak{m}_j s\}, \quad s\in \mathbb{R}, \ j=1, 2.
\end{equation}

Assume $\sigma_1> 0$, $\sigma_2> 0$. 
Using Lemma \ref{le4} and (\ref{10.01.2}) and reasoning as
in the proof of Lemma 3.3 in \cite{Rima}, we make sure that
 for each fixed $n\in\mathbb{Z}(2)$,  $h\in H$,
 the   functions $\hat\mu_j(s, n, h)$ can be extended to the complex 
 plane $\mathbb{C}$ as entire functions in   $s$, equation   
 (\ref{03.01.4})  holds for all   $s_j\in \mathbb{C}$, 
 $n_j\in\mathbb{Z}(2)$, $h_j\in H$ and the functions $\hat\mu_j(s, n, h)$ 
 do not vanish for all $s\in \mathbb{C}$, $n\in\mathbb{Z}(2)$, $h\in H$.   
   
3. By Lemma \ref{le4}, it follows from  (\ref{14.03.1}) and  (\ref{10.01.2}) that the functions $\hat\mu_j(s, n, h)$  for each fixed $n\in\mathbb{Z}(2)$,
$h\in H$, are entire functions in    $s$  of the order at most   2.  
Taking into account that the functions $\hat\mu_j(s, n, h)$ do not vanish,
we can apply 
the Hadamard theorem on the representation of an entire function of finite order and     obtain that the characteristic functions $\hat\mu_j(s, n, h)$ are of the form
\begin{equation}\label{03.01.12}
\hat\mu_j(s, n, h) = \begin{cases}\exp\{a_{j}(0, h)s^2+b_{j}(0, h)s+c_{j}(0, h)\}, 
&\text{\ if\ }
\ \ s\in \mathbb{R}, \ n=0, \ h\in H,\\
\exp\{a_{j}(1, h)s^2+b_{j}(1, h)s+c_{j}(1, h)\},  
&\text{\ if\ }\ \ s\in \mathbb{R}, 
\   n=1, \ h\in H,
\\
\end{cases}
\end{equation}
where $a_{j}(n, h)$, $b_{j}(n, h)$, $c_{j}(n, h)$ are some complex valued functions in $n$ and $h$. 
We will prove that in fact
\begin{equation}\label{04.01.9}
\hat\mu_j(s, n, h) = \begin{cases}\exp\{-\sigma_j s^2+i\mathfrak{m}_j s+ c_j(0, h)\}, 
&\text{\ if\ }
\ \ s\in \mathbb{R}, \ n=0, \ h\in H,\\
\exp\{-\sigma_j' s^2+i\mathfrak{m}_j's+c_j(1, h)\}, 
&\text{\ if\ }\ \ s\in \mathbb{R}, 
\   n=1, \ h\in H,
\\
\end{cases}
\end{equation}
where either
$
0<\sigma_j'<\sigma_j 
$ 
or
$
0<\sigma_j=\sigma_j'$, $\mathfrak{m}_j=\mathfrak{m}_j'$, $j=1, 2$.

First we will prove that 
$$a_{j}(0, h)=-\sigma_j, \ \ a_{j}(1, h)=-\sigma'_j, \ \  b_{j}(0, h)=i\mathfrak{m}_j,  \ \ b_{j}(1, h)=i\mathfrak{m}'_j, \quad h\in H, \ \ j=1, 2.$$

By the condition of the theorem,
$G$ is a finite Abelian group containing no elements of order  $2$. Hence 
$H^{(2)}=H$. Substitute    representations (\ref{03.01.12}) for the characteristic functions 
$\hat\mu_j(s, n, h)$   into equation (\ref{03.01.4}). Put
$n_1=n_2=0$, $h_1=h_2=h$ in the obtained equation.  Since $H^{(2)}=H$, we have
\begin{multline}
\label{04.01.1}
a_{1}(0, h)(s_1+s_2)^2+b_{1}(0, h)(s_1+s_2)+c_{1}(0, h)+
a_{2}(0, 0)(s_1+as_2)^2+b_{2}(0, 0)(s_1+as_2)\\+c_{2}(0, 0)=  
a_{1}(0, 0)(s_1-s_2)^2+b_{1}(0, 0)(s_1-s_2)+c_{1}(0, 0)+
a_{2}(0, h)(s_1-as_2)^2\\+b_{2}(0, h)(s_1-as_2)+c_{2}(0, h)
+2\pi ik_0(h), \quad  s_j\in \mathbb{R},  
 \ h\in H,
\end{multline}  
where $k_0(h)$ is an integer valued function in $h$.  

Equating  the coefficients  of
$s_1^2$ and  $s_2^2$ on each  side  of equation (\ref{04.01.1}), we get
\begin{equation}\label{04.01.2}
a_{1}(0, h)+a_{2}(0, 0)=a_{1}(0, 0)+a_{2}(0, h), \quad 
a_{1}(0, h)+a^2a_{2}(0, 0)=a_{1}(0, 0)+a^2a_{2}(0, h), \ h\in H.
\end{equation}
It follows from (\ref{03.01.7}) that
\begin{equation}\label{04.01.3}
a_{1}(0, 0)=-\sigma_1, \quad a_{2}(0, 0)=-\sigma_2.
\end{equation}
Taking into account (\ref{03.01.8}), we obtain from 
(\ref{04.01.2}) and (\ref{04.01.3}) that
\begin{equation}\label{04.01.4}
a_{1}(0, h)=-\sigma_1, \quad a_{2}(0, h)=-\sigma_2, \quad h\in H.
\end{equation}

Equating  the coefficients  of
$s_1$ and  $s_2$ on each  side  of equation (\ref{04.01.1}), we receive
\begin{equation}\label{04.01.5}
b_{1}(0, h)+b_{2}(0, 0)=b_{1}(0, 0)+b_{2}(0, h), \quad 
b_{1}(0, h)+ab_{2}(0, 0)=-b_{1}(0, 0)-ab_{2}(0, h).
\end{equation}
It follows from (\ref{03.01.7}) that
\begin{equation}\label{04.01.6}
b_{1}(0, 0)=i\mathfrak{m}_1, \quad b_{2}(0, 0)=i\mathfrak{m}_2.
\end{equation}
Taking into account (\ref{03.01.8}), we obtain from 
(\ref{04.01.5}) and (\ref{04.01.6}) that
\begin{equation}\label{04.01.7}
b_{1}(0, h)=i\mathfrak{m}_1, \quad b_{2}(0, h)=i\mathfrak{m}_2, \quad h\in H.
\end{equation}

Substitute representations (\ref{03.01.12}) for the characteristic functions 
$\hat\mu_j(s, n, h)$   into equation (\ref{03.01.4}). Setting now
$n_1=1$, $n_2=0$, $h_1=h_2=h$ in the obtained equation and taking into account that
$H^{(2)}=H$, we get
\begin{multline}
\label{04.01.8}
a_{1}(1, h)(s_1+s_2)^2+b_{1}(1, h)(s_1+s_2)+c_{1}(1, h)+
a_{2}(1, 0)(s_1+as_2)^2+b_{2}(1, 0)(s_1+as_2)\\+c_{2}(1, 0)=  
a_{1}(1, 0)(s_1-s_2)^2+b_{1}(1, 0)(s_1-s_2)+c_{1}(1, 0)+
a_{2}(1, h)(s_1-as_2)^2\\+b_{2}(1, h)(s_1-as_2)+c_{2}(1, h)
+2\pi ik_1(h), \quad  s_j\in \mathbb{R},  
 \ h\in H,
\end{multline}  
where $k_1(h)$ is an integer valued function in $h$. Arguing as in the case when $n_1=n_2=0$, we get from equation (\ref{04.01.8})
$$
a_{1}(1, h)=-\sigma'_1, \quad a_{2}(1, h)=-\sigma'_2, \quad h\in H.
$$
and
$$
b_{1}(1, h)=i\mathfrak{m}'_1, \quad b_{2}(1, h)=i\mathfrak{m}'_2, \quad h\in H.
$$
In view of (\ref{04.01.4}) and (\ref{04.01.7}), as a result representations (\ref{03.01.12}) take the form (\ref{04.01.9}).

4. Suppose $
0<\sigma_j'<\sigma_j 
$, $j=1, 2$.  Put $p=(0, 1, 0)\in X$, i.e., $p$ is the elements of order 2 of the group 
$X$. We recall that $\gamma_{\sigma, \mathfrak{m}}$ 
is the Gaussian distribution on the group
$\mathbb{R}$ with the characteristic function of the form
(\ref{26.01.1}). Consider the signed measures   
$$
\lambda_j=\frac{1}{2}\left(\gamma_{\sigma_j, \mathfrak{m}_j}+
\gamma_{\sigma'_j, \mathfrak{m}'_j}\right)+\frac{1}{2}\left(\gamma_{\sigma_j, \mathfrak{m}_j}-
\gamma_{\sigma'_j, \mathfrak{m}'_j}\right)*E_p, \quad j=1, 2,
$$
on the subgroup   
$\mathbb{R}\times \mathbb{Z}(2)$.
Then the characteristic functions $\hat\lambda_j(s, n, h)$ are of the form
\begin{equation}\label{17.01.1}
\hat\lambda_j(s, n, h) = \begin{cases}\exp\{-\sigma_j s^2+i\mathfrak{m}_j s\}, 
&\text{\ if\ }
\ \ s\in \mathbb{R}, \ n=0, \ h\in H,\\
\exp\{-\sigma_j' s^2+i\mathfrak{m}_j's\}, 
&\text{\ if\ }\ \ s\in \mathbb{R}, 
\   n=1, \ h\in H, \quad j=1, 2.
\\
\end{cases}
\end{equation} 

Denote by $\tau_j$  the distributions on the group $X$ with the characteristic 
functions 
\begin{equation}\label{04.01.11}
\hat\tau_j(s, n, h) = \begin{cases}\exp\{c_j(0, h)\}, 
&\text{\ if\ }
\ \ s\in \mathbb{R}, \ n=0, \ h\in H,\\
\exp\{c_j(1, h)\}, &\text{\ if\ }\ \ s\in \mathbb{R}, 
\   n=1, \ h\in H, \quad j=1, 2.
\\
\end{cases}
\end{equation}
It is obvious that 
\begin{equation}\label{18.01.20}
\hat\tau_j(s, n, h)=\hat\mu_j(0, n, h) \quad s\in \mathbb{R}, \ n\in\mathbb{Z}(2),  \  h\in H,
\end{equation}
and the distributions $\tau_j$ are supported 
in the subgroup $\mathbb{Z}(2)\times G$. It follows from (\ref{04.01.9}),(\ref{17.01.1}), and (\ref{04.01.11}) that 
$$
\hat\mu_j(s, n, h)=\hat\lambda_j(s, n, h)\hat\tau_j(s, n, h), \quad s\in \mathbb{R}, \ n\in\mathbb{Z}(2),  \ h\in H, \   j=1, 2.
$$
Hence
$$
\mu_j=\lambda_j*\tau_j, \quad j=1, 2.
$$

 The distributions $\tau_j$ can be written in the form 
$$
\tau_j=\sum_{g_i\in G} a^{(j)}_iE_{g_i}+\sum_{g_i\in G} b^{(j)}_iE_{g_i+p}, 
$$
where $a^{(j)}_i\ge 0$,   $b^{(j)}_i\ge 0$,  $j=1, 2$.
Since $\mu_j$ are distributions, by Lemma \ref{le17.2}, the inequalities 
\begin{equation}\label{17.01.2}
\left|\frac{a^{(j)}_i-b^{(j)}_i}{a^{(j)}_i+b^{(j)}_i}\right|\le\sqrt\frac{\sigma_j'}{\sigma_j}\exp\left\{-\frac{(\mathfrak{m}_j-\mathfrak{m}_j')^2}
{4(\sigma_j-\sigma_j')}\right\},\quad j=1, 2,
\end{equation}
hold  for all $a^{(j)}_i$, $b^{(j)}_i$  such that 
 $a^{(j)}_i+b^{(j)}_i>0$, $j=1, 2$.

Denote by $\gamma_j$  the distributions of the class $\Theta$ on the subgroup   
$\mathbb{R}\times \mathbb{Z}(2)$ of the group $X$ with the characteristic functions 
\begin{equation}\label{04.01.10}
\hat\gamma_j(s, n, h) = \begin{cases}\exp\{-\sigma_j s^2+i\mathfrak{m}_j s\}, 
&\text{\ if\ }
\ \ s\in \mathbb{R}, \ n=0, \ h\in H,\\
\rho_j\exp\{-\sigma_j' s^2+i\mathfrak{m}_j's\}, &\text{\ if\ }\ \ s\in \mathbb{R}, 
\   n=1, \ h\in H,
\\
\end{cases}
\end{equation}
where
\begin{equation}\label{17.01.3}
\rho_j=\sqrt\frac{\sigma_j'}{\sigma_j}
\exp\left\{-\frac{(\mathfrak{m}_j-\mathfrak{m}_j')^2}{
4(\sigma_j-\sigma_j')}\right\}, \quad j=1, 2. 
\end{equation}

Consider on the subgroup $\mathbb{Z}(2)$ the signed  measures
$$
\pi_j=\frac{\rho_j+1}{2\rho_j}E_0+\frac{\rho_j-1}{2\rho_j}E_p, 
\quad j=1, 2.
$$
Then
\begin{equation}\label{04.01.12}
\hat\pi_j(s, n, h) = \begin{cases}1, 
&\text{\ if\ }
\ \ s\in \mathbb{R}, \ n=0, \ h\in H,\\
\rho_j^{-1}, &\text{\ if\ }\ \ s\in \mathbb{R}, 
\   n=1, \ h\in H, \quad j=1, 2.
\\
\end{cases}
\end{equation}
It follows from (\ref{04.01.9}), (\ref{04.01.11}), (\ref{04.01.10}), and (\ref{04.01.12}) that
$$\hat\mu_j(s, n, h)=\hat\gamma_j(s, n, h)\hat\tau_j(s, n, h)\hat\pi_j(s, n, h), 
\quad s\in \mathbb{R}, \ n\in\mathbb{Z}(2),  \ h\in H, \   j=1, 2.
$$
This implies that $\mu_j=\gamma_j*\tau_j*\pi_j$, $j=1, 2$. 

5. Put $\omega_j=\tau_j*\pi_j$. Then
$\omega_j$ are signed measures on the subgroup  $\mathbb{Z}(2)\times G$.
We will verify that in fact $\omega_j$  are distributions. We have
\begin{multline*}
\omega_j=\tau_j*\pi_j=\left(\sum_{g_i\in G} a^{(j)}_iE_{g_i}+\sum_{g_i\in G} b^{(j)}_iE_{g_i+p}\right)*\left(\frac{\rho_j+1}{2\rho_j}E_0+\frac{\rho_j-1}{2\rho_j}E_p\right)\\=\sum_{g_i\in G} \frac{a^{(j)}_i(\rho_j+1)}{2\rho_j}E_{g_i}+
\sum_{g_i\in G} \frac{b^{(j)}_i(\rho_j+1)}{2\rho_j}E_{g_i+p}\\+
\sum_{g_i\in G} \frac{a^{(j)}_i(\rho_j-1)}{2\rho_j}E_{g_i+p}+
\sum_{g_i\in G} \frac{b^{(j)}_i(\rho_j-1)}{2\rho_j}E_{g_i}, \quad j=1, 2.
%=\\ \frac{1}{2\rho_j}\sum_{g_i\in G}\left(\left(a^{(j)}_i+b^{(j)}_i\right)
%\rho_j+    \left(a^{(j)}_i-b^{(j)}_i\right)\right)E_{g_i}\\+\frac{1}
%{2\rho_j} \sum_{g_i\in G}\left(\left(a^{(j)}_i+b^{(j)}_i\right)\rho_j+
%\left(b^{(j)}_i-a^{(j)}_i\right)\right)E_{k_i+p}, \quad j=1, 2.
\end{multline*}
This implies that 
\begin{equation}\label{18.01.1}
\omega_j(\{g_i\})=\frac{a^{(j)}_i(\rho_j+1)}{2\rho_j}+\frac{b^{(j)}_i(\rho_j-1)}{2\rho_j}=\frac{1}{2\rho_j}\left(\left(a^{(j)}_i+b^{(j)}_i\right)
\rho_j+\left(a^{(j)}_i-b^{(j)}_i\right)\right), \quad j=1, 2,
\end{equation}
and
\begin{equation}\label{18.01.2}
\omega_j(\{g_i+p\})=\frac{b^{(j)}_i(\rho_j+1)}{2\rho_j}+\frac{a^{(j)}_i(\rho_j-1)}{2\rho_j}=\frac{1}{2\rho_j}\left(\left(a^{(j)}_i+b^{(j)}_i\right)
\rho_j+\left(b^{(j)}_i-a^{(j)}_i\right)\right), \quad j=1, 2.
\end{equation}
From (\ref{17.01.2}) and  (\ref{17.01.3}) we get
\begin{equation}\label{25.01.10}
\left|\frac{a^{(j)}_i-b^{(j)}_i}{a^{(j)}_i+b^{(j)}_i}\right|\le\rho_j 
\end{equation}
for all $a^{(j)}_i$, $b^{(j)}_i$ such that $a^{(j)}_i+b^{(j)}_i>0$, $j=1, 2$. 
It follows from (\ref{18.01.1})--(\ref{25.01.10}) that 
$\omega_j(\{g_i\})\ge 0$ and $\omega_j(\{g_i+p\})\ge 0$
for all $g_i\in G$, $j=1, 2$. 

6. To complete the proof when  $\sigma_1> 0$, $\sigma_2> 0$ in (\ref{03.01.7})
it remains to prove that  either 
$\omega_1=\omega_2*\vartheta_2$ or   $\omega_2=\omega_1*\vartheta_1$, 
 where  $\vartheta_j\in \mathrm{M}^1(\mathbb{Z}(2))$.
 Substitute $s_1=s_2=0$ in 
equation (\ref{03.01.4}). In view of (\ref{18.01.20}),
 we obtain
\begin{multline}
\label{18.01.10}
\hat\tau_1(0, n_1+n_2, h_1+h_2)\hat\tau_2(0, n_1+n_2, h_1-h_2) 
\\=
\hat\tau_1(0, n_1+n_2, h_1-h_2)\hat\tau_2(0, n_1+n_2, h_1+h_2), 
\quad  n_j\in\mathbb{Z}(2), \ h_j\in H.
\end{multline} 
Taking into account Lemma \ref{le1}, it follows from
equation (\ref{18.01.10})  and  Lemma \ref{le5}  applied 
to the group $\mathbb{Z}(2)\times G$  that
either $\tau_1=\tau_2*\delta_2$ or  $\tau_2=\tau_1*\delta_1$, where 
$\delta_j\in{\rm M}^1(\mathbb{Z}(2))$, $j=1, 2$. Assume for definiteness that
$\tau_1=\tau_2*\delta_2$. 

We note that the set of all signed measures $\pi$ on the group 
$\mathbb{Z}(2)$ with nonvanishing characteristic functions and such
that $\hat\pi(0)=1$ forms an Abelian group with respect to the convolution 
with the identity element $E_0$.
It means that for each such $\pi$ there is a signed measure $\pi^{-1}$ on the 
group $\mathbb{Z}(2)$ such that $\pi*\pi^{-1}=E_0$.
Moreover, either  $\pi$ or $\pi^{-1}$ is a distribution.

 We have 
$\omega_1=\tau_1*\pi_1=\tau_2*\delta_2*\pi_1*\pi_2*\pi_2^{-1}=\omega_2*\delta_2*\pi_1*\pi_2^{-1}$. Put 
$\vartheta=\delta_2*\pi_1*\pi_2^{-1}$. Then $\vartheta$ is a signed measure on
the subgroup $\mathbb{Z}(2)$ and  either $\vartheta$ or $\vartheta^{-1}$
is a distribution. We have $\omega_1=\omega_2*\vartheta$. 
Put $\vartheta_2=\vartheta$ if $\vartheta$ 
is a distribution and put $\vartheta_1=\vartheta$ if $\vartheta^{-1}$ 
is a distribution.
Thus, we proved the theorem in the case when $0<\sigma_j'<\sigma_j$, $j=1, 2$.

Suppose $0<\sigma_j=\sigma_j'$, $\mathfrak{m}_j=\mathfrak{m}_j'$, $j=1, 2$. Then
$\mu_j=\gamma_j*\tau_j$, where $\gamma_j$ are Gaussian distributions on 
the subgroup 
$\mathbb{R}$ with the characteristic functions
$$\gamma_j(s, n, h)=\exp\{-\sigma_j s^2+i\mathfrak{m}_j s\}, \quad
s\in \mathbb{R}, \ n\in\mathbb{Z}(2), \   h\in H, \ j=1, 2,$$ 
and $\tau_j$, $j=1, 2$, are  given by (\ref{04.01.11}). 

Thus, we have completely proved the theorem in the case when $\sigma_1> 0$, 
$\sigma_2> 0$ in (\ref{03.01.7}).
 
7. Assume   $\sigma_1=\sigma_2=0$ in (\ref{03.01.7}). Then 
  (\ref{10.01.2}) takes the form
$$
\hat\mu_j(s, 0, 0)=\exp\{i\mathfrak{m}_j s\}, \quad s\in \mathbb{R}, \ j=1, 2.
$$
Put $\nu_j=\mu_j*E_{-\mathfrak{m}_j}$. We have $\hat\nu_j(s, 0, 0)=1$ for all 
$s\in\mathbb{R}$, $j=1, 2$.   By Lemma \ref{le6}, this implies  that the distributions 
$\nu_j$ are supported in the subgroup $A(X, \mathbb{R})=\mathbb{Z}(2)\times G$.
Let $\eta_j$ be independent random variables with values in the group $X$ and distributions  $\nu_j$.  In view of (\ref{03.01.8}),  
$\mathfrak{m}_1+a\mathfrak{m}_2=0$.
Since
$\hat\nu_j(y)=\hat\mu_j(y)(-\mathfrak{m}_j, y)$, $y\in Y$, $j=1, 2$,  
the characteristic functions $\hat\nu_j(y)$ satisfy equation (\ref{03.01.1}).
Hence by  Lemma \ref{le1}, the conditional distribution of the linear form
$M_2=\eta_1 + \alpha\eta_2$ given $M_1=\eta_1 + \eta_2$ is   symmetric. 
Note that the restriction of $\alpha$ to the subgroup 
$\mathbb{Z}(2)\times G$ coincides with $-I$. Therefore, if
we consider  $\eta_j$ as independent  
random variables with values in the subgroup  $\mathbb{Z}(2)\times G$, then the conditional distribution of the 
linear form $M_2 = \eta_1 -\eta_2$ given $M_1 = \eta_1 + \eta_2$  is symmetric.
The statement of the theorem follows from Lemma \ref{le5} applied 
to the group $\mathbb{Z}(2)\times G$. 

8. It remains to consider the case when $a=-1$. Then $\alpha=-I$ and the
statement of the theorem also follows   from Lemma \ref{le5}. 
\end{proof}

\section { Remarks and Discussions}
    
\begin{remark}\label{re1} We will verify that Theorem 
\ref{th1} 
can not be strengthened by narrowing the
class of distributions which are characterized by the symmetry of the conditional
distribution of one linear form   given another.
We retain the notation used in the formulation of Theorem 
\ref{th1}. 
 
 Suppose  $a \ne-1$. Let $\gamma_j$ be distributions of the 
class $\Theta$ on the group   
$\mathbb{R}\times \mathbb{Z}(2)$  with the nonvanishing characteristic 
functions of the form 
$$
\hat\gamma_j(s, n) = \begin{cases}\exp\{-\sigma_j s^2+i\mathfrak{m}_j s\}, &\text{\ if\ }
\ \ s\in \mathbb{R}, \ n=0,\\
\varkappa_j\exp\{-\sigma_j' s^2+i\mathfrak{m}_j's\}, &\text{\ if\ }\ \ s\in \mathbb{R}, 
\   n=1, \ \ j=1, 2. 
\\
\end{cases}
$$
Assume that equalities (\ref{03.01.8}) 
are fulfilled. Then the characteristic functions $\hat\gamma_j(s, n)$ satisfy the equation 
\begin{multline*}
\hat\gamma_1(s_1+s_2, n_1+n_2)\hat\gamma_2(s_1+a s_2, n_1+n_2) 
\\=
\hat\gamma_1(s_1-s_2, n_1+n_2)\hat\gamma_2(s_1-a s_2, n_1+n_2), \quad s_j\in \mathbb{R}, \  n_j\in\mathbb{Z}(2).
\end{multline*}
Obviously, if we consider the group   
$\mathbb{R}\times \mathbb{Z}(2)$ as a subgroup of the group $X$ and $\gamma_j$ as distributions on  $X$, then the characteristic functions 
$\hat\gamma_j(s, n, h)$ satisfy equation (\ref{03.01.2}). 

Put $K={\rm Ker}(I+\alpha_G)$. Then the restriction of the automorphism   
  $\alpha_{G}$  to the subgroup $K$ coincides with $-I$. Let  $\omega_j$ be distributions on  
the subgroup $\mathbb{Z}(2)\times K$ with the nonvanishing characteristic functions
and such that either 
$\omega_1=\omega_2*\vartheta_2$ or   $\omega_2=\omega_1*\vartheta_1$, 
 where  $\vartheta_j\in \mathrm{M}^1(\mathbb{Z}(2))$, $j=1, 2$. 
It follows from Lemmas \ref{le1} and \ref{le5} that 
if we consider $\omega_j$ as distributions on the group $X$, then the characteristic functions $\hat\omega_j(s, n, h)$ satisfy equation (\ref{03.01.2}).

Let $x_j=(t_j, m_j, g_j)$  be elements of the group $X$
 such that 
\begin{equation}\label{08.01.10}
x_1+\alpha x_2=0.
\end{equation}
It follows from (\ref{08.01.10}) that the 
characteristic functions $((t_j, m_j, g_j),(s, n, h))$ satisfy  equation (\ref{03.01.2}).

Consider $\gamma_j$ and $\omega_j$ as distributions on the group
$X$ and put $\mu_j=\gamma_j*\omega_j*E_{x_j}$, $j=1, 2$. 
Then 
$$
\hat\mu_j(s, n, h)=\hat\gamma_j(s, n, h)
\hat\omega_j(s, n, h)((t_j, m_j, g_j),(s, n, h)), \quad s\in \mathbb{R}, \  n\in\mathbb{Z}(2), \ h\in H, \ \ j=1, 2.
$$ 
From the above it follows that  the 
characteristic functions $\hat\mu_j(s, n, h)$ satisfy  equation (\ref{03.01.2}). 
Let $\xi_j$, $j=1, 2$,  be
independent random variables with values in the group
$X$ and distributions $\mu_j$. Since the characteristic functions 
$\hat\mu_j(s, n, h)$ satisfy equation  (\ref{03.01.2}),   
Lemma  \ref{le1} implies that  the conditional  distribution of the 
linear form $L_2 = \xi_1 + \alpha\xi_2$ given $L_1 = \xi_1 + \xi_2$ is symmetric.

Suppose $a=-1$.   Let  $\omega_j$ be distributions on  the subgroup
  $\mathbb{R}\times\mathbb{Z}(2)\times K$ with the nonvanishing 
  characteristic functions and such that either 
$\omega_1=\omega_2*\vartheta_2$ or   $\omega_2=\omega_1*\vartheta_1$, 
 where  $\vartheta_j\in \mathrm{M}^1(\mathbb{Z}(2))$, $j=1, 2$. 
Let $x_j$  be 
elements of the group $X$ satisfying (\ref{08.01.10}).   
Put $\mu_j=\omega_j*E_{x_j}$, $j=1, 2$.  It follows from 
Lemmas \ref{le1} and \ref{le5} that the characteristic functions 
 $\hat\mu_j(s, n, h)$ satisfy equation (\ref{03.01.2}). By Lemma  
 \ref{le1},  the conditional  
distribution of the linear form $L_2 = \xi_1 + \alpha\xi_2$ 
given $L_1 = \xi_1 + \xi_2$ is symmetric.
 \end{remark}  
 Compare Theorem \ref{th1} with the corresponding result
for the group $X=\mathbb{R}\times G$, where $G$ is a finite Abelian 
 group containing no elements of order 2 (see \cite[Theorem 3.1]{Rima}).
 
\begin{theorem} Let $X=\mathbb{R}\times G$, where $G$ is a finite Abelian group 
containing no elements of order  $2$.
Let  $\alpha=(a, \alpha_{G})$ be a topological automorphism of the group
  $X$.  Set $K={\rm Ker}(I+\alpha_{G})$.
Let $\xi_j$, $j=1, 2$, be
independent random variables with values in   $X$ and distributions
$\mu_j$ with nonvanishing characteristic functions. Assume that the conditional  distribution of the linear form $L_2 = \xi_1 + \alpha\xi_2$ given 
$L_1 = \xi_1 + \xi_2$ is symmetric. Then the following statements are true. 
\renewcommand{\labelenumi}{\rm{\Roman{enumi}.}}
\begin{enumerate}
  
\item	

 If  $a \ne-1$, then $\mu_j=\gamma_j*\omega*E_{x_j}$, where $\gamma_j$ are Gaussian distributions on the subgroup  $\mathbb{R}$, $\omega$ is a distribution on
 the subgroup $K$, $x_j\in X$.  

\item

 If  $a=-1$, then $\mu_j=\omega*E_{x_j}$, where  $\omega$ is a distribution on the subgroup  $\mathbb{R}\times K$, $x_j\in X$, $j=1, 2$.   
\end{enumerate}
\end{theorem}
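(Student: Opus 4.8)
The plan is to run the scheme of the proof of Theorem~\ref{th1}, which simplifies considerably once the group carries no element of order~$2$. Denote by $Y=\mathbb{R}\times H$ the character group of $X$, where $H$ is the character group of $G$; since $G$ has no elements of order~$2$, we have $H^{(2)}=H$. By Lemma~\ref{le1} the hypothesis is equivalent to equation~(\ref{03.01.1}), which in coordinates $u=(s_1,h_1)$, $v=(s_2,h_2)$ and with $\widetilde\alpha=(a,\widetilde\alpha_G)$ reads
\begin{multline*}
\hat\mu_1(s_1+s_2,\,h_1+h_2)\hat\mu_2(s_1+as_2,\,h_1+\widetilde\alpha_G h_2)\\
=\hat\mu_1(s_1-s_2,\,h_1-h_2)\hat\mu_2(s_1-as_2,\,h_1-\widetilde\alpha_G h_2),\quad s_j\in\mathbb{R},\ h_j\in H.
\end{multline*}

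First I would reduce to the model automorphism $\alpha=(a,-I)$. Setting $s_1=s_2=0$ leaves the restriction of the equation to $H$, which is precisely the Heyde equation on $G$ for the images of $\mu_j$ under the projection $X\to G$. Here the absence of order-$2$ elements is used: Lemma~\ref{le2} applies and produces $g_1,g_2\in G$ and a common distribution $\omega$ supported in $K={\rm Ker}(I+\alpha_G)$ so that, passing to $\theta_j=\mu_j*E_{-g_j}$, the conditional distribution of the corresponding linear forms stays symmetric (by Lemma~\ref{le1}) and $\sigma(\theta_j)\subset\mathbb{R}\times K$, the image of $\theta_j$ under $X\to G$ being $\omega$. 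On $K$ the restriction of $\alpha_G$ is $-I$, so it suffices to treat $\alpha=(a,-I)$ and distributions supported in $\mathbb{R}\times K$, the shifts $g_j$ being absorbed into the degenerate factors $E_{x_j}$ at the end.

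Next, for $a\ne -1$, put $h_1=h_2=0$ to obtain the classical Heyde equation on $\mathbb{R}$ for $\hat\theta_j(s,0)$; as these characteristic functions do not vanish and $a\ne -1$, Heyde's theorem (\cite{He}, \cite[Theorem~13.4.1]{KaLiRa}) gives $\hat\theta_j(s,0)=\exp\{-\sigma_j s^2+i\mathfrak{m}_j s\}$, in particular an entire function. Lemma~\ref{le4} then extends each $\hat\theta_j(\cdot,h)$ to an entire function of $s$ of order at most~$2$, continues the functional equation to $s_j\in\mathbb{C}$, and yields nonvanishing; the Hadamard factorization theorem gives $\hat\theta_j(s,h)=\exp\{a_j(h)s^2+b_j(h)s+c_j(h)\}$. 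Substituting this into the equation and equating the coefficients of $s_1^2,\,s_2^2,\,s_1,\,s_2$ exactly as in Step~3 of Theorem~\ref{th1}, and using $H^{(2)}=H$, I would show $a_j(h)\equiv-\sigma_j$ and $b_j(h)\equiv i\mathfrak{m}_j$. Hence $\hat\theta_j(s,h)=\hat\gamma_j(s)\exp\{c_j(h)\}$ with $\gamma_j\in\Gamma(\mathbb{R})$, and $\exp\{c_j(h)\}=\hat\theta_j(0,h)$ is the characteristic function of the $G$-image of $\theta_j$, which by the reduction is the common distribution $\omega$ on $K$. Thus $\theta_j=\gamma_j*\omega$ and $\mu_j=\gamma_j*\omega*E_{x_j}$, which is conclusion~I.

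For $a=-1$ the reduction gives $\alpha=-I$ on $\mathbb{R}\times K$; regarding $\theta_j$ as distributions on this subgroup, Lemma~\ref{le1} yields $\hat\theta_1(y_1+y_2)\hat\theta_2(y_1-y_2)=\hat\theta_1(y_1-y_2)\hat\theta_2(y_1+y_2)$ on its character group. Putting $y_1=y_2=y$ gives $\hat\theta_1(2y)=\hat\theta_2(2y)$; since $K$ has odd order, the character group of $\mathbb{R}\times K$ is $2$-divisible, so $\hat\theta_1=\hat\theta_2=:\hat\omega$ and $\mu_j=\omega*E_{x_j}$ with $\omega$ supported in $\mathbb{R}\times K$, which is conclusion~II. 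This direct $2$-divisibility step replaces the appeal to Lemma~\ref{le5} made in Theorem~\ref{th1}, which is unavailable here because $X$ contains no element of order~$2$. I expect the analytic core of the $a\ne -1$ case to be the main obstacle: the entire extension via Lemma~\ref{le4}, the Hadamard representation, and the coefficient-matching that forces the quadratic and linear parts of $\hat\theta_j(\cdot,h)$ to be independent of $h$; the remaining identifications follow quickly from Lemma~\ref{le2} and the $2$-divisibility of $H$.
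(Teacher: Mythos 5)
Your argument is sound, but note that the paper does not actually prove this statement: it quotes it from \cite[Theorem 3.1]{Rima}, so the only thing to compare against is the paper's proof of Theorem \ref{th1}, whose scheme you deliberately follow. Your proposal is the correct specialization of that scheme to a group with no element of order $2$, and the simplifications you identify are exactly right: the classical Heyde theorem on $\mathbb{R}$ replaces Lemma \ref{le3}; the entire middle of the paper's proof (the class $\Theta$, Lemma \ref{le17.2}, the signed measures $\pi_j$ and the positivity check in steps 4--6) disappears because there is no order-$2$ element to carry a $\varkappa$-factor; and your $2$-divisibility observation ($\hat\theta_1(2y)=\hat\theta_2(2y)$ with $Y^{(2)}=Y$, giving $\theta_1=\theta_2$ outright) is a clean and legitimate substitute for Lemma \ref{le5}, which is indeed unavailable here and which in the paper is responsible for the weaker ``equal up to a factor on $\mathbb{Z}(2)$'' conclusion. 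Two points are stated more confidently than they are proved. First, Lemma \ref{le4} does not by itself ``yield nonvanishing'' of the continued functions $\hat\theta_j(\cdot,h)$ on $\mathbb{C}$; that requires the analytically continued equation together with $a\ne-1$ (a zero of $\hat\theta_1(\cdot,2h')$ at $z$ forces, along the line $s_1+s_2=z$ on which $s_1-as_2$ is unconstrained, that $\hat\theta_2(\cdot,2h')\equiv 0$, contradicting nonvanishing on $\mathbb{R}$; then $H^{(2)}=H$ finishes). Second, transferring the symmetry hypothesis from $\mu_j$ to $\theta_j=\mu_j*E_{-g_j}$ needs the phase cancellation $g_1+\alpha_G g_2=0$, which does follow from the ``Moreover'' clause of Lemma \ref{le2} by dividing the two functional equations, but should be said. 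Both gaps are at the same level of gloss the paper itself uses (``reasoning as in the proof of Lemma 3.3 in \cite{Rima}''), so I regard the proposal as correct.
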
  

We see that the presence of even one element of order 2 in a group $X$ makes the description of distributions which are characterized by the symmetry of 
the conditional distribution of one linear 
form of independent random variables given another  more complicated.
 
\medskip
 
Assume that $X=\mathbb{R}\times \mathbb{Z}(2)\times G$, where $G$ is a finite Abelian 
 group. Let $\mu=\gamma*\omega$, 
 where $\gamma$ is a distribution of the class 
 $\Theta$ on the subgroup
 $\mathbb{R}\times\mathbb{Z}(2)$  with  the nonvanishing characteristic 
 function, and $\omega$   is a 
distribution on the subgroup $\mathbb{Z}(2)\times G$ with the nonvanishing characteristic function.
In connection with Theorem \ref{th1} at the end of the article, we discuss the question of the uniqueness of such a representation for $\mu$.
We need the following easily verified assertion.
\begin{lemma}\label{lenew11}
Assume that $X=\mathbb{R}\times \mathbb{Z}(2)\times G$, 
where $G$ is a finite Abelian 
 group.  Consider the distribution $\gamma$ of the class 
 $\Theta$ on the subgroup
 $\mathbb{R}\times\mathbb{Z}(2)$  with the characteristic 
 function of the form 
\begin{equation}\label{04.03.5}
\hat\gamma(s, n, h)=\begin{cases}\exp\{-\sigma  s^2+i\mathfrak{m}  s\}, &\text{\ if\ }\ 
\ s\in \mathbb{R},   
\ n=0, \   h\in H,\\
\varkappa\exp\{-\sigma' s^2+i\mathfrak{m}'s\}, &\text{\ if\ }\ \ s\in \mathbb{R}, 
\    n=1, \   h\in H,
\\
\end{cases}
\end{equation} 
such that inequalities $(\ref{08.01.8})$  are satisfied.  Let $\omega$
 be a distribution on the subgroup $\mathbb{Z}(2)\times G$ with the nonvanishing characteristic function. Put $\mu=\gamma*\omega$. Assume 
 $\mu=\gamma_1*\omega_1$, where $\gamma_1\in\Theta$  and
 $\omega_1\in {\rm M}^1(\mathbb{Z}(2)\times G)$. Then $\gamma_1=\gamma*\pi$
 and  $\omega_1=\omega*\pi^{-1}$, where $\pi$ is a signed measure  
 on the subgroup $\mathbb{Z}(2)$. 
\end{lemma}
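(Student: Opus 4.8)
The plan is to pass to characteristic functions and exploit the fact that the two factorizations of $\mu$ must agree identically on the character group $Y=\mathbb{R}\times\mathbb{Z}(2)\times H$. First I would record the product structure of the relevant characteristic functions. Since $\gamma$ and $\gamma_1$ are supported in $\mathbb{R}\times\mathbb{Z}(2)$, their characteristic functions do not depend on $h$; since $\omega$ and $\omega_1$ are supported in $\mathbb{Z}(2)\times G$, their characteristic functions do not depend on $s$. Writing $\hat\gamma(s,n)$, $\hat\gamma_1(s,n)$ and $\hat\omega(n,h)$, $\hat\omega_1(n,h)$ accordingly, the equality $\mu=\gamma*\omega=\gamma_1*\omega_1$ becomes
$$
\hat\gamma(s,n)\hat\omega(n,h)=\hat\gamma_1(s,n)\hat\omega_1(n,h),\quad s\in\mathbb{R},\ n\in\mathbb{Z}(2),\ h\in H.
$$
Because $\gamma$ satisfies $(\ref{08.01.8})$ its characteristic function is nonvanishing (note $\varkappa\ne0$), and $\hat\omega$ is nonvanishing by hypothesis; hence $\hat\mu$ is nonvanishing, and from $\hat\mu=\hat\gamma_1\hat\omega_1$ both $\hat\gamma_1$ and $\hat\omega_1$ are nonvanishing as well. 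In particular the coefficient $\varkappa_1$ of $\gamma_1$ satisfies $\varkappa_1\ne0$.

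Next I would pin down the parameters of $\gamma_1$. Setting $h=0$ and using $\hat\omega(0,0)=\hat\omega_1(0,0)=1$, the displayed identity at $n=0$ reads $\exp\{-\sigma s^2+i\mathfrak{m}s\}=\exp\{-\sigma_1 s^2+i\mathfrak{m}_1 s\}$ for all $s$, whence $\sigma_1=\sigma$ and $\mathfrak{m}_1=\mathfrak{m}$. At $n=1$, $h=0$ both sides are a nonzero constant times a Gaussian in $s$ (here one uses $\hat\omega(1,0)\ne0$ and $\hat\omega_1(1,0)\ne0$); comparing moduli and phases gives $\sigma_1'=\sigma'$ and $\mathfrak{m}_1'=\mathfrak{m}'$. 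Thus $\gamma$ and $\gamma_1$ share all Gaussian parameters and differ only through the factor $\varkappa_1/\varkappa$ at $n=1$.

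This suggests defining $\pi$ to be the signed measure on $\mathbb{Z}(2)$ with $\hat\pi(0)=1$ and $\hat\pi(1)=\varkappa_1/\varkappa$, explicitly
$$
\pi=\frac{\varkappa+\varkappa_1}{2\varkappa}E_0+\frac{\varkappa-\varkappa_1}{2\varkappa}E_p,
$$
where $p$ is the element of order $2$ of $\mathbb{Z}(2)$. A direct check of characteristic functions gives $\hat\gamma_1(s,n)=\hat\gamma(s,n)\hat\pi(n)$ for all $s,n$, that is, $\gamma_1=\gamma*\pi$. Since $\hat\pi(0)=1\ne0$ and $\hat\pi(1)=\varkappa_1/\varkappa\ne0$, the measure $\pi$ is invertible in the Abelian group of signed measures on $\mathbb{Z}(2)$ with nonvanishing characteristic function (as noted in the proof of Theorem \ref{th1}), and $\hat\pi^{-1}(n)=\hat\pi(n)^{-1}$.

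Finally I would recover $\omega_1$. Substituting $\gamma_1=\gamma*\pi$ into the factorization identity and cancelling the nonvanishing factor $\hat\gamma(s,n)$ yields $\hat\omega(n,h)=\hat\pi(n)\hat\omega_1(n,h)$, hence $\hat\omega_1(n,h)=\hat\pi^{-1}(n)\hat\omega(n,h)=\widehat{\omega*\pi^{-1}}(n,h)$ for all $n,h$; by uniqueness of the characteristic function, $\omega_1=\omega*\pi^{-1}$, which completes the proof. I do not expect a genuine obstacle here: the argument is a routine verification, and the only points requiring care are the nonvanishing bookkeeping that legitimises dividing by $\hat\gamma$ and by $\hat\pi$, and the elementary matching of Gaussian densities that forces equality of the parameters.
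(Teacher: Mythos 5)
Your proposal is correct and follows essentially the same route as the paper: equate the two factorizations on the character group, deduce $\sigma_1=\sigma$, $\mathfrak{m}_1=\mathfrak{m}$, $\sigma_1'=\sigma'$, $\mathfrak{m}_1'=\mathfrak{m}'$ by substituting $(n,h)=(0,0)$ and $(1,0)$, and absorb the remaining discrepancy $\varkappa_1/\varkappa$ into a signed measure $\pi$ on $\mathbb{Z}(2)$. Your extra bookkeeping (the explicit form of $\pi$ and the nonvanishing checks that justify the divisions) only makes the same argument more explicit.
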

\begin{proof} Assume
\begin{equation}\label{04.03.3}
\hat\gamma_1(s, n, h)=\begin{cases}\exp\{-\sigma_1 s^2+i\mathfrak{m}_1 s\}, &\text{\ if\ }\ 
\ s\in \mathbb{R}, 
\ n=0, \   h\in H,\\
\varkappa_1\exp\{-\sigma'_1 s^2+i\mathfrak{m}'_1s\}, &\text{\ if\ }\ \ s\in \mathbb{R}, 
\    n=1, \   h\in H.
\\
\end{cases}
\end{equation}
We have 
\begin{equation}\label{04.03.1}
\hat\gamma(s, n, h)\hat\omega(s, n, h)=\hat\gamma_1(s, n, h)\hat\omega_1(s, n, h), 
\quad
 s\in \mathbb{R}, \ n\in \mathbb{Z}(2), \ h\in H.
\end{equation}
Note that the characteristic functions $\hat\omega(s, n, h)$ and 
$\hat\omega_1(s, n, h)$ do not depend on $s$.
Substituting $n=0$, $h=0$ into  (\ref{04.03.1}) and taking into 
account (\ref{04.03.5}) 
and (\ref{04.03.3}), we find
\begin{equation}\label{04.03.2}
\sigma=\sigma_1, \quad    \mathfrak{m}=\mathfrak{m}_1.
\end{equation} 
Substituting $n=1$, $h=0$ into  (\ref{04.03.1}) and taking into 
account (\ref{04.03.5}) and (\ref{04.03.3}), we obtain
\begin{equation}\label{04.03.4}
\sigma'=\sigma'_1, \quad   \mathfrak{m}'=\mathfrak{m}'_1.
\end{equation} 
Denote by $\pi$ the signed measure on the subgroup $\mathbb{Z}(2)$ 
with the characteristic function
$$
\hat\pi(s, n, h) = \begin{cases}1, 
&\text{\ if\ }\
\ s\in \mathbb{R}, 
\ n=0, \   h\in H, \\
\frac{\varkappa_1}{\varkappa}, &\text{\ if\ }\ \   
     s\in \mathbb{R}, 
\ n=0, \   h\in H.
\\
\end{cases}
$$
It follows from  (\ref{04.03.5}), (\ref{04.03.3}),  (\ref{04.03.2}) and (\ref{04.03.4})
that $\hat\gamma_1(s, n, h)=\hat\gamma(s, n, h)\hat\pi(s, n, h)$ for all 
$s\in \mathbb{R}$, $n\in \mathbb{Z}(2)$, $h\in H$. Taking this into account,
(\ref{04.03.1}) implies that 
$\hat\omega_1(s, n, h)=\hat\omega(s, n, h)\hat\pi^{-1}(s, n, h)$ 
for all $s\in \mathbb{R}$,  $n\in \mathbb{Z}(2)$, $h\in H$. 
Hence $\gamma_1=\gamma*\pi$
 and  $\omega_1=\omega*\pi^{-1}$. \end{proof}

\begin{proposition}\label{pr1}
 Assume that $X=\mathbb{R}\times \mathbb{Z}(2)\times G$, where $G$ is a finite Abelian 
 group.  Let $p=(0, 1, 0)$ be the element of order $2$ of the  subgroup
 $\mathbb{Z}(2)$. Let 
 $\gamma$ be a distribution of the class 
 $\Theta$ on the subgroup
 $\mathbb{R}\times\mathbb{Z}(2)$  with the characteristic 
 function of the form $\hat\gamma(s, n)=\phi(s, n)$, where  
 the function   $\phi(s, n)$ 
is represented in the form 
$(\ref{21.01.1})$, and inequalities 
 $(\ref{08.01.8})$  are satisfied.
 Consider a 
distribution $\omega$ on the subgroup $\mathbb{Z}(2)\times G$ of the form
\begin{equation}\label{21.01.10}
\omega=\sum_{g_i\in G} a_iE_{g_i}+\sum_{g_i\in G} b_iE_{g_i+p}, 
\end{equation} 
where $a_i\ge 0$,  $b_i\ge 0$, and suppose that the characteristic function 
 $\hat\omega(n, h)$
does not vanish. 
Put $\mu=\gamma*\omega$. Assume $\mu=\gamma_1*\omega_1$, where $\gamma_1\in\Theta$  and
 $\omega_1\in {\rm M}^1(\mathbb{Z}(2)\times G)$.  This implies that
  $\gamma_1=\gamma*E_m$, $\omega_1=\omega*E_{m}$, where $m\in  \mathbb{Z}(2)$, 
  if and only if 
 \begin{equation}\label{21.01.12}
|\varkappa|=\sqrt\frac{\sigma'}{\sigma}\exp\left\{-\frac{(\mathfrak{m}-\mathfrak{m}')^2}
{4(\sigma-\sigma')}\right\}
\end{equation} 
and there is an element $g_i\in G$ such that either $a_i=0$ 
and $b_i>0$ or $a_i>0$ and $b_i=0$.
\end{proposition}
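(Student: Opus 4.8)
The plan is to reduce an arbitrary factorization $\mu = \gamma_1 * \omega_1$ to a one–parameter family and then impose the two positivity constraints coming separately from $\gamma_1 \in \Theta$ and from $\omega_1 \in \mathrm{M}^1(\mathbb{Z}(2)\times G)$. Since $\gamma$ and $\omega$ satisfy the hypotheses of Lemma \ref{lenew11}, that lemma applies and yields $\gamma_1 = \gamma * \pi$, $\omega_1 = \omega * \pi^{-1}$, where $\pi$ is a signed measure on the subgroup $\mathbb{Z}(2)$. Because $\gamma$ and $\gamma_1$ are probability distributions, $\hat\pi(0)=1$; setting $r=\hat\pi(1)\in\mathbb{R}\setminus\{0\}$ we get $\pi=\frac{1+r}{2}E_0+\frac{1-r}{2}E_p$ and $\pi^{-1}=\frac{r+1}{2r}E_0+\frac{r-1}{2r}E_p$. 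The two trivial factorizations $\gamma_1=\gamma*E_m$, $\omega_1=\omega*E_m$ with $m\in\mathbb{Z}(2)$ correspond precisely to $\pi=E_0$ (i.e. $r=1$) and $\pi=E_p$ (i.e. $r=-1$). Hence the statement to be proved is exactly that the set of admissible parameters $r$ equals $\{-1,1\}$.

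Next I would read off the two constraints on $r$. Since $0<\sigma'<\sigma$, membership $\gamma_1=\gamma*\pi\in\Theta$ is governed by $(\ref{08.01.8})$; as $\hat\gamma_1(s,1)=\varkappa r\exp\{-\sigma' s^2+i\mathfrak{m}'s\}$, this amounts to $|\varkappa r|\le\rho$, i.e. $|r|\le\rho/|\varkappa|$, where $\rho=\sqrt{\sigma'/\sigma}\,\exp\{-(\mathfrak{m}-\mathfrak{m}')^2/(4(\sigma-\sigma'))\}$ is the right-hand side of $(\ref{08.01.8})$. For the second constraint I would compute, exactly as in $(\ref{18.01.1})$–$(\ref{18.01.2})$, the coefficients of $\omega_1=\omega*\pi^{-1}$:
\begin{equation*}
\omega_1(\{g_i\})=\frac{(a_i+b_i)r+(a_i-b_i)}{2r}, \qquad \omega_1(\{g_i+p\})=\frac{(a_i+b_i)r-(a_i-b_i)}{2r}.
\end{equation*}
Requiring both to be nonnegative for every $i$ (splitting into the cases $r>0$ and $r<0$, where the sign of $2r$ reverses the inequalities) collapses in either case to $|r|\ge\frac{|a_i-b_i|}{a_i+b_i}$ for all $i$ with $a_i+b_i>0$; that is, $|r|\ge M$, where $M=\max\{|a_i-b_i|/(a_i+b_i): a_i+b_i>0\}$.

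Finally I would combine the two: the admissible $r$ are precisely those with $r\ne0$ and $M\le|r|\le\rho/|\varkappa|$. Because $|a_i-b_i|\le a_i+b_i$ we always have $M\le1$, and $(\ref{08.01.8})$ gives $|\varkappa|\le\rho$, hence $\rho/|\varkappa|\ge1$; so this set always contains $\pm1$. The set of admissible $|r|$ is the interval $[M,\rho/|\varkappa|]$ (or $(0,\rho/|\varkappa|]$ when $M=0$), so it reduces to the single value $1$ — equivalently the admissible $r$ reduce to $\{-1,1\}$ — if and only if $M=1$ and $\rho/|\varkappa|=1$. The equality $\rho/|\varkappa|=1$ is precisely $(\ref{21.01.12})$, while $M=1$ means there is an index $i$ with $|a_i-b_i|=a_i+b_i$, i.e. an element $g_i\in G$ for which either $a_i=0<b_i$ or $b_i=0<a_i$. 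This gives the asserted equivalence. I expect the only delicate point to be the positivity analysis of $\omega_1$ (keeping careful track of how the sign of $r$ flips the inequalities), together with the pinching argument that an interval containing $1$ collapses to $\{1\}$ exactly when both of its endpoints equal $1$; everything else is a direct computation resting on Lemma \ref{lenew11}.
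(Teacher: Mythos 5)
Your proposal is correct and takes essentially the same route as the paper: both reduce the problem via Lemma \ref{lenew11} to a signed measure $\pi$ on $\mathbb{Z}(2)$ with $\hat\pi(0)=1$, use Lemma \ref{le19.01.1} to decide when $\gamma*\pi\in\Theta$, and use the mass computations $(\ref{n18.01.1})$--$(\ref{n18.01.2})$ to decide when $\omega*\pi^{-1}$ is a distribution. The only difference is organizational: you describe the whole admissible range of $|\hat\pi(1)|$ as a closed interval containing $1$ and observe that it degenerates to $\{1\}$ exactly when both endpoints equal $1$, whereas the paper proves sufficiency by contradiction and necessity by two explicit constructions.
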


\begin{proof} Sufficiency. Assume that (\ref{21.01.12}) holds. 
By Lemma  \ref{lenew11},   $\gamma_1=\gamma*\pi$ and $\omega_1=\omega*\pi^{-1}$, where $\pi$ is a signed measure on $\mathbb{Z}(2)$.
Taking into account (\ref{21.01.12}), it follows from  Lemma \ref{le19.01.1} 
  that in fact 
$\pi$ is a distribution. Suppose that $\pi$ is not a degenerate distribution. 
Then $\pi^{-1}$ is a singed measure but not a distribution. We can write  
the characteristic function 
 $\hat\pi^{-1}(s, n, h)$ in the form
  \begin{equation}\label{22.01.1}
\hat\pi^{-1}(s, n, h) = \begin{cases}1, 
&\text{\ if\ }
\ \ s\in \mathbb{R}, \ n=0, \ h\in H,\\
c^{-1}, &\text{\ if\ }\ \ s\in \mathbb{R}, 
\   n=1, \ h\in H,
\\
\end{cases}
\end{equation} 
where $0<|c|<1$.  It follows from (\ref{22.01.1}) that 
\begin{equation}\label{20.01.11}
\pi^{-1}=\frac{c+1}{2c}E_0+\frac{c-1}{2c}E_p. 
\end{equation} 
We can assume without loss of generality that $c>0$. 
Taking into account (\ref{21.01.10}) and (\ref{20.01.11}) and repeating the reasoning in item 5 of Theorem \ref{th1} we get that
\begin{equation}\label{n18.01.1}
\omega_1(\{g_i\})=\frac{a_i(c+1)}{2c}+\frac{b_i(c-1)}{2c}=\frac{1}{2c}\left(\left(a_i+b_i\right)
c+\left(a_i-b_i\right)\right),  
\end{equation}
and
\begin{equation}\label{n18.01.2}
\omega_1(\{g_i+p\})=\frac{b_i(c+1)}{2c}+\frac{a_i(c-1)}{2c}=\frac{1}{2c}\left(\left(a_i+b_i\right)
c+\left(b_i-a_i\right)\right).
\end{equation}

Suppose that there is an element $g_i\in G$ such that  $a_i=0$ and $b_i>0$. 
It follows from
(\ref{n18.01.1}) that 
$$
\omega_1(\{g_i\})=\frac{(c-1)b_i}{2c}<0
$$
 for any $0<c<1$. Similarly, if there 
is an element $g_i\in G$ such that  $a_i>0$ and $b_i=0$,   then
(\ref{n18.01.2}) implies that $\omega_1(\{g_i+p\})<0$ for any $0<c<1$. We get that the convolution
$\omega_1=\omega*\pi^{-1}$ is not a distribution for any signed 
measure $\pi^{-1}$. The obtained contradiction 
shows that $\pi$ is a degenerate distribution.
Thus, the sufficiency is proved.

Let us prove the necessity. Assume 
$$
0<|\varkappa|<\sqrt\frac{\sigma'}{\sigma}
\exp\left\{-\frac{(\mathfrak{m}-\mathfrak{m}')^2}{
4(\sigma-\sigma')}\right\}. 
$$
 Choose a number  $\rho$   in such a way that  
$$
|\varkappa|<\rho\le\sqrt\frac{\sigma'}{\sigma}
\exp\left\{-\frac{(\mathfrak{m}-\mathfrak{m}')^2}{
4(\sigma-\sigma')}\right\}.
$$
Denote by $\pi$ the signed measure  on  the subgroup $\mathbb{Z}(2)$ with 
the characteristic function 
$$
\hat\pi(s, n, h) = \begin{cases}1, 
&\text{\ if\ }
\ \ s\in \mathbb{R}, \ n=0, \ h\in H,\\
\frac{\rho}{\varkappa}, &\text{\ if\ }\ \ s\in \mathbb{R}, 
\   n=1, \ h\in H.
\\
\end{cases}
$$
Then $\pi^{-1}$ is a nondegenerate distribution on   $\mathbb{Z}(2)$. We have
$\mu=\gamma*\omega=(\gamma*\pi)*(\omega*\pi^{-1})$. Put $\gamma_1=\gamma*\pi$ and 
$\omega_1=\omega*\pi^{-1}$.
It is obvious that
$\gamma_1\in\Theta$ and $\omega_1\in {\rm M}^1(\mathbb{Z}(2)\times G)$.
Thus, we have proved the necessity of  condition (\ref{21.01.12}).

Assume that (\ref{21.01.12}) is fulfilled and  for each $g_i\in G$ either $a_i=b_i=0$ or $a_i>0$ and $b_i>0$. Take $c$ such that $0<c<1$. Assume that a signed measure
$\pi^{-1}$ is defined by (\ref{20.01.11}). Then $\pi$ is a nondegenerate
distribution. Put $\gamma_1=\gamma*\pi$ and $\omega_1=\omega*\pi^{-1}$.
It is obvious that $\gamma_1\in\Theta$. 
It follows from  
(\ref{n18.01.1}) and (\ref{n18.01.2}) that if a number $c$ 
 is taken close enough to 1, then
$\omega_1(\{g_i\})>0$ and $\omega_1(\{g_i+p\})>0$ for all $g_i\in G$, 
where $a_i>0$ and $b_i>0$. This implies that $\omega_1\in {\rm M}^1(\mathbb{Z}(2)\times G)$. The proposition is proved completely.  
\end{proof}
     
 \medskip
 
\noindent{\bf Acknowledgements} This article was written during the author's stay at 
the Department of Mathematics University of Toronto as a Visiting Professor. 
I am very grateful to Ilia Binder for his invitation and support.
I would also like to thank the reviewer for carefully reading my article and helpful comments.

\medskip

\noindent B. Verkin Institute for Low Temperature Physics and Engineering\\
of the National Academy of Sciences of Ukraine\\
47, Nauky ave, Kharkiv, 61103, Ukraine

\medskip

\noindent Department of Mathematics  
University of Toronto \\
40 St. George Street
Toronto, ON,  M5S 2E4
Canada 

\medskip

\noindent e-mail:    gennadiy\_f@yahoo.co.uk

\end{document}